\newcommand{\Z}{\mathbf{Z}}
\newcommand{\Q}{\mathbf{Q}}
\newcommand{\R}{\mathbf{R}}
\newcommand{\C}{\mathbf{C}}
\newcommand{\h}{\mathcal{H}}
\newcommand{\dd}{\mathrm{d}}
\DeclareMathOperator{\Cl}{Cl}
\DeclareMathOperator{\darg}{darg}
\DeclareMathOperator{\dv}{div}
\DeclareMathOperator{\GL}{GL}
\DeclareMathOperator{\ord}{ord}
\DeclareMathOperator{\SL}{SL}
\newtheorem{thm}{Theorem}
\newtheorem{lem}[thm]{Lemma}
\newtheorem{pro}[thm]{Proposition}
\theoremstyle{definition}
\newtheorem{definition}[thm]{Definition}
\theoremstyle{remark}
\newtheorem{remark}[thm]{Remark}
\author[F. Brunault]{François Brunault}
\address{ÉNS Lyon, UMPA\\ 46 allée d'Italie\\ 69007 Lyon, France}
\email{francois.brunault@ens-lyon.fr}
\urladdr{http://perso.ens-lyon.fr/francois.brunault}
\title[Regulators of Siegel units]{Regulators of Siegel units and applications}
\begin{document}
\renewcommand{\refname}{References}
\frontmatter

\begin{abstract}
We compute the regulator of two Siegel units and give applications to Boyd's conjectures and Zagier's conjectures.
\end{abstract}

\subjclass{11F03, 11F11, 11F67, 11G16, 19F27}
\keywords{}

\maketitle

\mainmatter

In a recent work \cite{zudilin}, W. Zudilin proved a formula for the regulator of two modular units. The aim of this article is to generalize this result to arbitrary Siegel units and give applications to elliptic curves.

For any two holomorphic functions $f$ and $g$ on a Riemann surface, define the real 1-form
\begin{equation*}
\eta(f,g) = \log |f| \, \dd \arg g - \log |g| \, \dd \arg f.
\end{equation*}
Note that if $f$ and $g$ are two modular units, then $\eta(f,g)$ is a well-defined 1-form on the upper half-plane $\h$. We prove the following theorem.

\begin{thm}\label{main thm}
Let $N \geq 1$ be an integer. Let $u=(a,b)$ and $v=(c,d)$ be two nonzero vectors in $(\Z/N\Z)^2$, and let $g_u$ and $g_v$ be the Siegel units associated to $u$ and $v$ (see \S\ref{sec siegel} for the definition). We have
\begin{equation}\label{main formula}
\int_0^{i\infty} \eta(g_u,g_v)  =\pi \Lambda^*(e_{a,d} e_{b,-c}+e_{a,-d} e_{b,c},0)
\end{equation}
where $e_{a,b}$ is the Eisenstein series of weight $1$ and level $N^2$ defined by
\begin{equation}
e_{a,b}(\tau) = \alpha_0(a,b) + \sum_{ \substack{m,n \geq 1\\ m \equiv a, \; n \equiv b (N)}} q^{mn} - \sum_{ \substack{m,n \geq 1\\ m \equiv -a, \; n \equiv -b (N)}} q^{mn} \qquad (q=e^{2\pi i \tau})
\end{equation}
with
\begin{equation*}
\alpha_0(a,b) = \begin{cases} 0 & \textrm{if } a=b=0\\
\frac12 - \{\frac{b}{N}\} & \textrm{if } a=0 \textrm{ and } b \neq 0\\
\frac12 - \{\frac{a}{N}\} & \textrm{if } a \neq 0 \textrm{ and } b = 0\\
0 & \textrm{if } a \neq 0 \textrm{ and } b \neq 0.
\end{cases}
\end{equation*}
Here $\Lambda^*(f,0)$ denotes the regularized value of the completed $L$-function $\Lambda(f,s)$ at $s=0$ (see \S\ref{sec Lstar} for the definition).
\end{thm}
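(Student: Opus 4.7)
The plan is to reduce the integral in \eqref{main formula} to a Mellin transform on the imaginary axis and then identify the integrand with an explicit weight $2$ modular form built from the Eisenstein series $e_{a,b}$.

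First I would use the infinite product expansion of the Siegel units (from \S\ref{sec siegel}) to derive Fourier expansions on $\tau = iy$ of both $\log|g_u(iy)|$ and of the restriction of $\dd \arg g_v$ to the axis. The former takes the shape $-\pi y\, B_2(\tilde a/N) + R_u(y)$, with $\tilde a$ the canonical representative of $a$ modulo $N$ and $R_u$ an absolutely convergent series in $e^{-2\pi y}$; the latter is an analogous series. Substituting into $\eta(g_u,g_v) = \log|g_u|\, \dd \arg g_v - \log|g_v|\, \dd \arg g_u$ yields an explicit $1$-form $\phi_{u,v}(y)\, \dd y$ on the positive imaginary axis.

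Second, the heart of the argument is to identify $\phi_{u,v}$ with $F(iy)$, up to a harmless scalar, for
\[
F := e_{a,d}\, e_{b,-c} + e_{a,-d}\, e_{b,c}.
\]
The $n$-th Fourier coefficient of $F$ is a convolution sum over quadruples $(m_1,n_1,m_2,n_2)$ with $m_1 n_1 + m_2 n_2 = n$ and prescribed residues modulo $N$. The corresponding coefficient of $\phi_{u,v}$ comes from multiplying the $q$-series of $\log g_u$ by that of $g_v'/g_v$ and antisymmetrizing in $(u,v)$: the first coordinate of a Siegel vector controls the $q$-power exponents $m + \tilde a/N$ and $m + 1 - \tilde a/N$ appearing in the two halves of the product formula, while the second coordinate controls a root of unity $e^{2\pi i b/N}$. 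From this bookkeeping the natural pairings $(a,d)$ with $(b,-c)$ and $(a,-d)$ with $(b,c)$ drop out immediately, and the two summands on the right of \eqref{main formula} correspond precisely to the two halves of the Siegel product expansions.

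Third, once the identification is made, termwise Mellin transform via $\int_0^\infty e^{-2\pi n y}\, \dd y = (2\pi n)^{-1}$ converts the integral into a Dirichlet series of $F$, and tracking the constants gives $\pi\, \Lambda(F,0)$. The principal obstacle is that $F$ is not cuspidal at the cusp $0$: when either $a$ or $b$ vanishes modulo $N$ the term $\alpha_0(a,b)$ produces a nonzero constant in $e_{a,b}$, so $F$ has a nonzero value at $0$ and the Mellin integral diverges as $y \to 0$. This is exactly what forces the use of the regularized value $\Lambda^*(F,0)$ defined in \S\ref{sec Lstar}. On the geometric side, the same divergence arises from the interaction of the linear terms $-\pi y\, B_2(\tilde a/N)$ in $\log|g_u|$ with the oscillatory Fourier tail of $\dd \arg g_v$ near $y=0$; carefully matching these boundary contributions with the regularization defining $\Lambda^*$, and then checking the overall factor $\pi$, completes the proof.
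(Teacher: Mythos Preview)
Your proposed identification $\phi_{u,v}(y)=c\cdot F(iy)$ for $F=e_{a,d}e_{b,-c}+e_{a,-d}e_{b,c}$ is not correct, and this is the essential gap. If you expand both $\log|g_u|$ and $\darg g_v$ at the cusp $\infty$ using the product formula, the cross term has the shape
\[
\frac{(\zeta_N^{bm_1}+\zeta_N^{-bm_1})(\zeta_N^{dm_2}-\zeta_N^{-dm_2})}{m_1m_2}\;n_2\;e^{-2\pi(m_1n_1+m_2n_2)t/N}\,\dd t
\]
with congruence conditions $n_1\equiv\pm a$, $n_2\equiv\pm c$: roots of unity are attached to the $b$- and $d$-variables, congruences to the $a$- and $c$-variables, and there is an extra weight $n_2/(m_1m_2)$. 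By contrast the $q$-coefficients of $e_{a,d}e_{b,-c}$ carry congruence conditions on all four variables and no weighting factor. These are not the same series, and the claimed pairings $(a,d)$ with $(b,-c)$ do \emph{not} drop out of direct multiplication. Integrating your expression termwise gives $\frac{n_2}{m_1m_2(m_1n_1+m_2n_2)}$, which is not the Dirichlet coefficient of any obvious modular form at any $s$, so ``tracking the constants'' will not produce $\Lambda^*(F,0)$.

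What actually makes the proof work is the Rogers--Zudilin change of variables. One expands $\log|g_u|$ at the cusp $0$ (via $g_{a,b}(-1/\tau)\sim g_{b,-a}(\tau)$) and $\darg g_v$ at $\infty$, so the two factors live in the variables $1/t$ and $t$ respectively. Inside the resulting integral one substitutes $t'=\tfrac{n_2}{m_1}t$; this absorbs the unwanted weight $n_2/m_1$ into the measure and, crucially, regroups the exponentials so that $(m_1,m_2)$ appear together and $(n_1,n_2)$ appear together. Only after this reshuffling does the integrand factor as $f^*_{a,d}(iy)\,e^*_{b,c}\bigl(\tfrac{i}{N^2y}\bigr)$, where $f_{a,b}$ is the ``additive-character'' Eisenstein series related to $e_{a,b}$ by $W_{N^2}$. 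One then applies a Mellin lemma for products of this mixed type to get $\Lambda^*$ at $s=2$, and the functional equation converts $\Lambda^*(f_{a,d}f_{-b,c}-f_{a,-d}f_{b,c},2)$ into the stated $\Lambda^*$ at $s=0$ for the $e$-series. The cancellation of the boundary terms (the linear piece $-\pi B(b/N)/t$ and the constants $C_{b,-a}$) against explicit values of $\Lambda(f_{\bullet},1)$ and $\Lambda(f_{\bullet},2)$ also needs to be checked separately; your sketch conflates this with the regularization of $\Lambda^*(F,0)$, but they are distinct phenomena.
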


\begin{remark}
Let $Y(N)$ be the affine modular curve of level $N$ over $\Q$, and let $X(N)$ be the smooth compactification of $Y(N)$. The homology group $H=H_1(X(N)(\C),\{\mathrm{cusps}\},\Z)$ is generated by the modular symbols $\xi(\gamma) = \{\gamma 0, \gamma \infty\}$ with $\gamma \in \GL_2(\Z/N\Z)$. If $c$ is an element of $H$, we may write $c = \sum_i \lambda_i \xi(\gamma_i)$, and it follows that
\begin{equation*}
\int_c \eta(g_u,g_v) = \sum_i \lambda_i \int_0^\infty \eta(g_{u \gamma_i},g_{v \gamma_i}).
\end{equation*}
Moreover, the Siegel units generate (up to constants) the group $\mathcal{O}(Y(N))^\times \otimes \Q$. Therefore Theorem \ref{main thm} (together with Lemma \ref{lem int darg}) gives a formula for all possible regulator integrals $\int_c \eta(u,v)$ with $c \in H$ and $u,v \in \mathcal{O}(Y(N))^\times$.
\end{remark}

\begin{remark}
Theorem \ref{main thm} is a generalization of \cite[Thm 1]{zudilin}. More precisely, let $\tilde{g}_a$, $a \in \Z/N\Z$ denote the modular units arising in \cite{zudilin}. Then for every $c \in \Z/N\Z$, we have
\begin{equation*}
\tilde{g}_a(c/N+it) = g_{a,ac}(iNt) \qquad (t>0).
\end{equation*}
We recover \cite[Thm 1]{zudilin} by taking $u=(a,ac)$ and $v=(b,bc)$ in Theorem \ref{main thm}. Note that in this case $f_{a,b,c}=e_{a,bc}e_{ac,-b}+e_{a,-bc}e_{ac,b}$ belongs to $\Q[[q^N]]$, and $f_{a,b,c}(\tau/N)$ is a modular form on $\Gamma_1(N)$. More generally, if $M = \begin{pmatrix} a & b \\ c & d \end{pmatrix} \in M_2(\Z/N\Z)$ is any matrix such that $\det (M)=0$, then $F_M(\tau/N) = (e_{a,d} e_{b,-c} + e_{a,-d} e_{b,c})(\tau/N)$ is a modular form of weight $2$ on $\Gamma_1(N)$. It would be interesting to study further the properties of these modular forms and to understand their possible relations with the toric modular forms introduced by L. Borisov and P. Gunnells \cite{borisov-gunnells1}.
\end{remark}

The proof of Theorem \ref{main thm} follows the strategy of \cite{zudilin}. We express the logarithms of Siegel units as a double infinite sum (Lemma \ref{lem gu}) and deduce an expression for the regulator as a quadruple sum. We then perform the same analytical change of variables from \cite{rogers-zudilin}, leading to the Mellin transform of a product of Eisenstein series. The key lemma to do this (Lemma \ref{lem fgh}) suggests that similar results should hold in higher weight.

We point out that the simple shape of the Eisenstein series $e_{a,b}$ makes Theorem \ref{main thm} particularly amenable to explicit computations. We give some applications of Theorem \ref{main thm} in Section \ref{sec applications}, for elliptic curves which are parametrized by modular units \cite{brunault:mod_units}.

I would like to thank Wadim Zudilin for helpful exchanges related to this work.

\section{Siegel units}\label{sec siegel}

We recall some basic definitions and results about Siegel units, for which we refer the reader to \cite[\S 1]{kato} and \cite{kubert-lang}.

Let $B_2 = X^2-X+\frac16$ be the second Bernoulli polynomial. For $x \in \R$, we define $B(x) = B_2(\{x\}) = \{x\}^2-\{x\}+\frac16$, where $\{x\} = x- \lfloor x \rfloor$ denotes the fractional part of $x$.

Let $\h$ be the upper half-plane. Let $N \geq 1$ be an integer and $\zeta_N=e^{2\pi i/N}$. For any $(a,b) \in (\Z/N\Z)^2$, $(a,b) \neq (0,0)$, the Siegel unit $g_{a,b}$ on $\h$ is defined by
\begin{equation}\label{def gab}
g_{a,b}(\tau) = q^{B(a/N)/2} \prod_{n \geq 0} (1-q^n q^{\tilde{a}/N} \zeta_N^b) \prod_{n \geq 1} (1-q^n q^{-\tilde{a}/N} \zeta_N^{-b}) \qquad (q=e^{2\pi i \tau})
\end{equation}
where $\tilde{a}$ is the representative of $a$ satisfying $0 \leq \tilde{a} <N$. Here $q^\alpha = e^{2\pi i \alpha \tau}$ for $\alpha \in \Q$. It is known that the function $g_{a,b}^{12N}$ is modular for the group
\begin{equation*}
\Gamma(N) = \{ \gamma \in \SL_2(\Z) : \gamma \equiv I_2 \pmod{N} \}.
\end{equation*}
In fact $g_{a,b}$ defines an element of $\mathcal{O}(Y(N))^\times \otimes \Q$, where $Y(N)$ denotes the affine modular curve of level $N$ over $\Q$. Recall that the group $\GL_2(\Z/N\Z)$ acts on $Y(N)$ by $\Q$-automorphisms. For any $\gamma \in \GL_2(\Z/N\Z)$, we have the identity in $\mathcal{O}(Y(N))^\times \otimes \Q$
\begin{equation}\label{gab gamma}
g_{a,b} | \gamma = g_{(a,b)\gamma}.
\end{equation}

\begin{lem}\label{lem gab sigma}
Let $(a,b) \in (\Z/N\Z)^2$, $(a,b) \neq (0,0)$. We have
\begin{equation}
g_{a,b}(-1/\tau) = e^{-2\pi i (\{\frac{a}{N}\}-\frac12)(\{\frac{b}{N}\}-\frac12)} g_{b,-a}(\tau) \qquad (\tau \in \h).
\end{equation}
\end{lem}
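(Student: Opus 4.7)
The strategy is to reduce the identity to the classical transformation formulas of the Jacobi theta function $\theta_1$ and the Dedekind $\eta$ function under $\tau \mapsto -1/\tau$. Setting $\alpha = \{a/N\}$, $\beta = \{b/N\}$, and $z = \alpha\tau + \beta$, I would first use the Jacobi triple product to rewrite the defining infinite product of $g_{a,b}$ in the compact form
\[
g_{a,b}(\tau) = -i\, q^{\alpha(\alpha-1)/2}\, e^{\pi i z}\, \frac{\theta_1(z,\tau)}{\eta(\tau)},
\]
using $B(a/N)/2 - 1/12 = \alpha(\alpha-1)/2$, which follows from $B_2(x) = x^2 - x + 1/6$.

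Substituting $\tau \mapsto -1/\tau$ gives $z \mapsto \beta - \alpha/\tau = y/\tau$ with $y := \beta\tau - \alpha$. Applying the standard transformation laws
\[
\theta_1(y/\tau, -1/\tau) = -i(-i\tau)^{1/2}\, e^{\pi i y^2/\tau}\, \theta_1(y,\tau), \qquad \eta(-1/\tau) = (-i\tau)^{1/2}\, \eta(\tau),
\]
the $(-i\tau)^{1/2}$ factors cancel, so that $g_{a,b}(-1/\tau)$ becomes an explicit exponential times $\theta_1(y,\tau)/\eta(\tau)$. The $1/\tau$-terms cancel nicely: expanding $y^2/\tau = \beta^2\tau - 2\alpha\beta + \alpha^2/\tau$, the $\alpha^2/\tau$ combines with $-\alpha(\alpha-1)/\tau$ from $(q')^{\alpha(\alpha-1)/2}$ to give $\alpha/\tau$, which in turn cancels the $-\alpha/\tau$ coming from $e^{\pi i z(\tau^*)}$. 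What remains is an exponent of the form $\pi i \beta + \pi i \beta^2\tau - 2\pi i \alpha\beta$, times an overall sign.

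To compare with $g_{b,-a}(\tau)$, whose theta argument is $z' := \beta\tau + \{-a/N\}$, note that $y \equiv z' \pmod{\mathbf{Z}}$: one has $y = z' - 1$ when $\alpha > 0$ and $y = z'$ when $\alpha = 0$. The quasi-periodicity $\theta_1(y+1,\tau) = -\theta_1(y,\tau)$ handles the first case, contributing an extra sign. Applying the compact formula from step one to $g_{b,-a}$ and taking the ratio, the lemma then reduces to the elementary identity
\[
-i\, e^{\pi i (\alpha + \beta - 2\alpha\beta)} = e^{-2\pi i (\alpha - 1/2)(\beta - 1/2)},
\]
which is immediate from expanding $(\alpha-1/2)(\beta-1/2)$.

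The main obstacle is pure bookkeeping: choosing consistent branches for $(-i\tau)^{1/2}$, tracking the sign coming from the quasi-periodicity of $\theta_1$, and correctly handling the slight asymmetry in the definition of $g_{a,b}$ induced by the choice of the representative $\tilde{a} \in [0,N)$ (which makes the $\alpha = 0$ case formally different, even though both cases give the same answer). None of the individual steps is conceptually deep, but any sign slip will produce an answer off by a fourth root of unity.
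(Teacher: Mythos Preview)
Your argument is correct and complete: the identity $g_{a,b}(\tau) = -i\,q^{\alpha(\alpha-1)/2}e^{\pi i z}\,\theta_1(z,\tau)/\eta(\tau)$ follows from the Jacobi triple product exactly as you say, the $1/\tau$-cancellation goes through, and the case split $\alpha>0$ versus $\alpha=0$ (with the two signs from $e^{\pi i(y+1)}$ and $\theta_1(y+1,\tau)=-\theta_1(y,\tau)$ cancelling in the former) is handled cleanly. The final elementary identity is indeed just the expansion of $(\alpha-\tfrac12)(\beta-\tfrac12)$.

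The paper's proof is quite different in spirit. It first invokes the general fact that $g_{a,b}|\gamma = g_{(a,b)\gamma}$ in $\mathcal{O}(Y(N))^\times\otimes\Q$, which immediately gives $g_{a,b}(-1/\tau)=w_{a,b}\,g_{b,-a}(\tau)$ for \emph{some} root of unity $w_{a,b}$; it then reads off the explicit value of $w_{a,b}$ by citing the transformation properties K1 and K4 of the Klein forms $\mathfrak{k}_{\alpha,\beta}$ in Kubert--Lang. In other words, the paper outsources the computation to the literature on Klein forms, whereas you carry out an equivalent computation directly in the $\theta_1/\eta$ model. Your route is more self-contained and makes all the constants visible; the paper's route is shorter on the page but relies on the reader having Kubert--Lang to hand. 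Since Klein forms and $\theta_1/\eta$ differ only by an elementary exponential factor, the two computations are ultimately the same calculation in different coordinates.
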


\begin{proof}
By taking the matrix $\gamma = \begin{pmatrix} 0 & -1 \\ 1 & 0 \end{pmatrix}$ in (\ref{gab gamma}), we see that $g_{a,b}(-1/\tau)=w_{a,b}g_{b,-a}(\tau)$ for some root of unity $w_{a,b}$. The formula for $w_{a,b}$ follows from \cite[Chap. 2, \S 1, K1, K4]{kubert-lang}.
\end{proof}

\begin{lem}\label{lem int darg}
For any $a,b \in \Z/N\Z$, we have
\begin{equation}\label{int darg eq}
\int_0^\infty \darg g_{a,b} = \begin{cases} 0 & \textrm{if } a=0 \textrm{ or } b=0\\
2\pi (\{\frac{a}{N}\}-\frac12)(\{\frac{b}{N}\}-\frac12) & \textrm{if } a \neq 0 \textrm{ and } b \neq 0.
\end{cases}
\end{equation}
\end{lem}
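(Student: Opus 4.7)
The plan is to handle the degenerate cases $a=0$ or $b=0$ directly and reduce the main case to an Abel-regularized integral computation.

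\emph{Degenerate cases.} If $b=0$ (with $a\neq 0$), then each factor $(1-q^{n\pm\tilde a/N})$ in (\ref{def gab}) lies in $(0,1)$ when $q=e^{-2\pi t}\in(0,1)$, and $q^{B(a/N)/2}>0$; thus $g_{a,0}(it)\in\R_{>0}$ and $\darg g_{a,0}$ vanishes identically on the imaginary axis. If $a=0$ (with $b\neq 0$), grouping the infinite product into complex-conjugate pairs $(1-q^n\zeta_N^b)(1-q^n\zeta_N^{-b})=1-2q^n\cos(2\pi b/N)+q^{2n}>0$ shows $g_{0,b}(it)=q^{1/12}(1-\zeta_N^b)\cdot R(t)$ with $R(t)>0$, so $\arg g_{0,b}(it)\equiv\arg(1-\zeta_N^b)=\pi(\{b/N\}-\tfrac12)$ is constant and $\darg g_{0,b}$ again vanishes. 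In both sub-cases the integral is $0$, matching (\ref{int darg eq}).

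\emph{Main case $a,b\neq 0$.} Set $\gamma=\tilde a/N\in(0,1)$ and $\theta=2\pi\tilde b/N$. Logarithmic differentiation of (\ref{def gab}) at $\tau=it$ and taking the real part (which kills the $\pi i\tau B(a/N)$ contribution) gives
\begin{equation*}
\frac{d}{dt}\arg g_{a,b}(it)=2\pi\sin\theta\left[\sum_{n\geq 0}\frac{(n+\gamma)e^{-2\pi t(n+\gamma)}}{|1-e^{-2\pi t(n+\gamma)}\zeta_N^b|^2}-\sum_{n\geq 1}\frac{(n-\gamma)e^{-2\pi t(n-\gamma)}}{|1-e^{-2\pi t(n-\gamma)}\zeta_N^b|^2}\right].
\end{equation*}
For each fixed index, the substitution $v=e^{-2\pi t(n\pm\gamma)}$ evaluates the $t$-integral to $\tfrac{1}{2\pi}\int_0^1 dv/|1-v\zeta_N^b|^2=\tfrac{\pi-\theta}{4\pi\sin\theta}$ (standard integration), independent of $n$. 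Because the individual sums over $n$ diverge, I would introduce an Abel regularization with weight $e^{-s(n\pm\gamma)}$ for $s>0$; Fubini then applies and geometric summation yields
\begin{equation*}
I(s)=\frac{\pi-\theta}{2}\cdot\frac{e^{-s\gamma}-e^{-s(1-\gamma)}}{1-e^{-s}}.
\end{equation*}
Letting $s\to 0^+$ (the right-hand factor tends to $1-2\gamma$ by L'H\^opital) produces $\tfrac{(\pi-\theta)(1-2\gamma)}{2}$, which with $\theta=2\pi\{b/N\}$ and $\gamma=\{a/N\}$ rearranges to $2\pi(\{a/N\}-\tfrac12)(\{b/N\}-\tfrac12)$, the value claimed in (\ref{int darg eq}).

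The main obstacle is the Tauberian interchange $\lim_{s\to 0^+}I(s)=\int_0^\infty\frac{d}{dt}\arg g_{a,b}(it)\,dt$. I would justify it by pairing the $m$-th term of the first sum with the $m$-th term of the second sum (for $m\geq 1$), leaving only the isolated $n=0$ summand of the first sum. A Taylor expansion of $e^{\pm 2\pi t\gamma}$ around $1$ shows that each paired difference is $O(tm\,e^{-2\pi tm})$ near $t=0^+$ and $O(m\,e^{-2\pi t(m-\gamma)})$ at large $t$, with bounds uniform in $s\in[0,1]$; dominated convergence then lets the Abel limit pass through the integral, completing the argument.
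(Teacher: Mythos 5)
Your degenerate cases are fine, and your formal computation in the main case is set up correctly: the term-by-term $t$-integral of each summand does equal $\frac{1}{2\pi}\int_0^1 \frac{\dd v}{|1-v\zeta_N^b|^2}=\frac{\pi-\theta}{4\pi\sin\theta}$ independently of $n$, and the Abel-regularized sum $I(s)$ does converge to $\frac{(\pi-\theta)(1-2\gamma)}{2}$, which is the correct value. The problem is that the entire burden of the proof then rests on the identity $\lim_{s\to 0^+}I(s)=\int_0^\infty \darg g_{a,b}$, and the justification you offer for it does not work. Your proposed dominating function is the termwise bound $O\bigl((1+tm)e^{-2\pi tm}\bigr)$ on the $m$-th paired difference; summing this over $m$ gives a majorant of order $1/t$ as $t\to 0^+$, which is not integrable, so dominated convergence does not follow from the bounds you state.

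Worse, no argument based on termwise absolute bounds can be repaired here. Since every individual $t$-integral equals the same constant $\frac{\pi-\theta}{4\pi\sin\theta}$ regardless of $n$ and of the sign $\pm\gamma$, each paired difference integrates to exactly $0$ over $t\in(0,\infty)$. If the family of paired differences were absolutely integrable-summable over $(m,t)$, Tonelli--Fubini would force the unpaired $n=0$ term to be the only contribution, yielding $\frac{\pi-\theta}{2}$ instead of $\frac{(\pi-\theta)(1-2\gamma)}{2}$; as these disagree whenever $\gamma\neq\frac12$ and $\theta\neq\pi$ (e.g.\ $N=3$, $a=b=1$ gives $\frac{\pi}{6}$ versus the correct $\frac{\pi}{18}$), the absolute double sum must diverge. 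The correct answer is produced only by cancellation \emph{across} $m$, which your pointwise estimates discard. To close the gap you would need either a genuine uniform bound on the summed function $F_s(t)$ exploiting this cancellation, or a different mechanism altogether. The paper avoids the issue entirely: it reads off the value modulo $2\pi$ from the two endpoint limits via the transformation $\tau\mapsto -1/\tau$, and then removes the $2\pi\Z$ ambiguity by showing (through Klein forms) that $\int_0^\infty\darg\mathfrak{k}_{\alpha,\beta}$ depends continuously on $(\alpha,\beta)\in(0,1)^2$ and vanishes on the slice $\beta=\frac12$. Your route is more direct in principle, but as written it asserts rather than proves the one step that is actually delicate.
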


\begin{proof}
If $a=0$ or $b=0$ then $g_{a,b}$ has constant argument on the imaginary axis $\tau = it$, $t >0$, hence $\int_0^\infty \darg g_{a,b}=0$.

If $a \neq 0$ and $b \neq 0$, it is easily seen that $\arg g_{a,b}(it) \xrightarrow{t \to \infty} 0$. Moreover, by Lemma \ref{lem gab sigma}, we have $\arg g_{a,b}(it) \xrightarrow{t \to 0} -2\pi (\{\frac{a}{N}\}-\frac12)(\{\frac{b}{N}\}-\frac12) \pmod{2\pi}$. This proves (\ref{int darg eq}) up to a multiple of $2\pi$. In order to establish the exact equality, let us introduce the Klein forms \cite[Chap. 2, \S 1, p. 27]{kubert-lang}:
\begin{equation*}
\mathfrak{k}_{\alpha,\beta}(\tau) = e^{-\frac12 \eta(\alpha \tau+\beta,\tau) (\alpha \tau+\beta)} \sigma(\alpha \tau+\beta, \tau) \qquad (\alpha,\beta \in \R; \tau \in \h)
\end{equation*}
where $\eta$ and $\sigma$ denote the Weierstrass functions. The link with Siegel units is given by
\begin{equation*}
g_{a,b}(\tau) = w \mathfrak{k}_{a/N,b/N}(\tau) \Delta(\tau)^{1/12} \qquad (1 \leq a,b \leq N-1)
\end{equation*}
where $w$ is a root of unity \cite[p. 29]{kubert-lang}. Since $\Delta$ is positive on the imaginary axis, it follows that
\begin{equation*}
\int_0^\infty \darg g_{a,b} = \int_0^\infty \darg \mathfrak{k}_{a/N,b/N}.
\end{equation*}
Using the $q$-product formula for the $\sigma$ function \cite[Chap. 18, \S 2]{lang:elliptic} and the Legendre relation $\eta_2 \omega_1 - \eta_1 \omega_2 = 2\pi i$, we find
\begin{equation}\label{eq kab}
\mathfrak{k}_{\alpha,\beta}(it)=\frac{1}{2\pi i} e^{-\pi \alpha^2 t} e^{\pi i\alpha \beta}(e^{\pi i \beta}e^{-\pi \alpha t}-e^{-\pi i \beta} e^{\pi \alpha t}) \prod_{n \geq 1} \frac{(1-e^{-2\pi (n+\alpha)t}e^{2\pi i \beta})(1-e^{-2\pi (n-\alpha)t}e^{-2\pi i \beta})}{(1-e^{-2\pi nt})^2}.
\end{equation}
Assume $0 < \alpha,\beta < 1$. Then by (\ref{eq kab}), we have $\arg \mathfrak{k}_{\alpha,\beta}(it) \xrightarrow{t \to \infty} \pi (\alpha \beta-\beta+\frac12)$. Moreover, the Klein forms are homogeneous of weight -1 \cite[p. 27, K1]{kubert-lang}, which implies
\begin{equation*}
\mathfrak{k}_{\alpha,\beta}(-1/\tau) = \frac{1}{\tau} \mathfrak{k}_{\beta,-\alpha}(\tau).
\end{equation*}
From this we get $\arg \mathfrak{k}_{\alpha,\beta}(it) \xrightarrow{t \to 0} \pi (-\alpha \beta+\alpha) \pmod{2\pi}$ and
\begin{equation*}
\int_0^\infty \darg \mathfrak{k}_{\alpha,\beta} \equiv 2\pi (\alpha-\frac12)(\beta-\frac12) \pmod{2\pi}.
\end{equation*}
Moreover, using the fact that $\int_0^\infty \darg \mathfrak{k}_{\alpha,\beta} = \int_i^{\infty} \darg \mathfrak{k}_{\alpha,\beta} - \int_i^\infty \darg \mathfrak{k}_{\beta,-\alpha}$ and taking the imaginary part of the logarithm of (\ref{eq kab}), we may express $\int_0^\infty \darg \mathfrak{k}_{\alpha,\beta}$ as an infinite sum, which shows that it is a continuous function of $(\alpha,\beta) \in (0,1)^2$. But for $\beta=\frac12$, the Klein form $\mathfrak{k}_{\alpha,\frac12}(it)$ has constant argument. This implies that $\int_0^\infty \darg \mathfrak{k}_{\alpha,\beta} = 2\pi (\alpha-\frac12)(\beta-\frac12)$ for any $0 < \alpha,\beta < 1$.
\end{proof}

\section{$L$-functions of modular forms}\label{sec Lstar}

In this section we recall basic results on the functional equation satisfied by $L$-functions of modular forms.

Let $f(\tau) = \sum_{n=0}^\infty a_n q^n$ be a modular form of weight $k \geq 1$ on the group $\Gamma_1(N)$. The $L$-function of $f$ is defined by $L(f,s)=\sum_{n=1}^\infty a_n n^{-s}$, $\Re(s)>k$. Define the completed $L$-function
\begin{equation*}
\Lambda(f,s):=N^{s/2} (2\pi)^{-s} \Gamma(s) L(f,s) = N^{s/2} \int_0^\infty (f(iy)-a_0) y^s \frac{\dd y}{y}.
\end{equation*}

Recall that the Atkin-Lehner involution $W_N$ on $M_k(\Gamma_1(N))$ is defined by $(W_N f)(\tau)=i^k N^{-k/2} \tau^{-k} f(-1/(N\tau))$ (note that in the case $k=2$ this $W_N$ is the opposite of the usual involution acting on differential $1$-forms). The following theorem is classical (see \cite[Thm 4.3.5]{miyake}).

\begin{thm}\label{thm hecke}
Let $f=\sum_{n=0}^\infty a_n q^n \in M_k(\Gamma_1(N))$. The function $\Lambda(f,s)$ can be analytically continued to the whole $s$-plane, and satisfies the functional equation $\Lambda(f,s) = \Lambda(W_N f,k-s)$. Moreover, write $W_N f = \sum_{n=0}^\infty b_n q^n$. Then the function
\begin{equation*}
\Lambda(f,s)+\frac{a_0}{s}+\frac{b_0}{k-s}
\end{equation*}
is holomorphic on the whole $s$-plane.
\end{thm}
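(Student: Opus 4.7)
The plan is to run the classical Hecke argument based on the Mellin integral representation of $\Lambda(f,s)$. Since $f$ is holomorphic at the cusp $i\infty$, the difference $f(iy)-a_0$ decays exponentially as $y\to\infty$, so the integral $\int_0^\infty (f(iy)-a_0)\,y^s\,\dd y/y$ converges absolutely at $\infty$ for every $s\in\C$; near $y=0$ the modular transformation property only gives $f(iy)=O(y^{-k})$, which forces the restriction $\Re(s)>k$. On that half-plane, expanding $f(iy)-a_0$ as a $q$-series and integrating term by term recovers the Dirichlet series definition of $\Lambda(f,s)$, so the two descriptions of $\Lambda(f,s)$ given in the section agree there.

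The central step is to split the integral at the fixed point $y=1/\sqrt{N}$ of the involution $y\mapsto 1/(Ny)$, and to transform the small-$y$ piece by the substitution $y\mapsto 1/(Nu)$. Rewriting the definition of $W_N$ at $\tau=iy$ (using $-1/(Niy)=i/(Ny)$ and $(iy)^k=i^k y^k$) yields the identity
\begin{equation*}
f\!\left(\frac{i}{Ny}\right) = N^{k/2}\,y^k\,(W_Nf)(iy),
\end{equation*}
which turns $N^{s/2}\int_0^{1/\sqrt{N}} f(iy)\,y^s\,\dd y/y$ into $N^{(k-s)/2}\int_{1/\sqrt{N}}^\infty (W_Nf)(iu)\,u^{k-s}\,\dd u/u$. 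After isolating the constant contributions $a_0$ (from the original integrand) and $b_0$ (from the transformed one) and evaluating the elementary integrals $\int_0^{1/\sqrt{N}} y^{s-1}\,\dd y = N^{-s/2}/s$ and $\int_{1/\sqrt{N}}^\infty u^{k-s-1}\,\dd u = -N^{-(k-s)/2}/(k-s)$, I would obtain the symmetric identity
\begin{equation*}
\Lambda(f,s)+\frac{a_0}{s}+\frac{b_0}{k-s} = N^{s/2}\!\!\int_{1/\sqrt{N}}^{\infty}\!(f(iy)-a_0)\,y^s\,\frac{\dd y}{y} + N^{(k-s)/2}\!\!\int_{1/\sqrt{N}}^{\infty}\!\bigl((W_Nf)(iu)-b_0\bigr)u^{k-s}\,\frac{\dd u}{u},
\end{equation*}
initially valid for $\Re(s)>k$.

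From this, both assertions of the theorem are immediate. The right-hand side defines an entire function of $s$, since the exponential decay of $f(iy)-a_0$ and $(W_Nf)(iu)-b_0$ as $y,u\to\infty$ gives uniform convergence on compact subsets of $\C$; hence the left-hand side extends to an entire function, which is the holomorphy statement. The functional equation $\Lambda(f,s)=\Lambda(W_Nf,k-s)$ then follows because the right-hand side is manifestly invariant under the simultaneous substitution $(f,s)\mapsto(W_Nf,k-s)$, once one knows $W_N^2=\mathrm{id}$; the latter is a direct verification from the definition, reducing to $i^{2k}(-1)^k=1$ after two applications.

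The only real obstacle is careful bookkeeping of the powers of $i$, $N$ and signs in the transformation of $f(i/(Ny))$, together with the verification that $W_N^2=\mathrm{id}$ so that invoking the symmetric identity for $W_Nf$ and $k-s$ returns $\Lambda(f,s)$ with no spurious phase. These steps are mechanical but must be tracked; there is no conceptual novelty beyond Hecke's original argument for Dirichlet $L$-functions.
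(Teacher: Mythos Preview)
Your argument is correct: it is exactly Hecke's classical method, and the bookkeeping of the powers of $N$, of $i$, and of the two constant terms $a_0$, $b_0$ is carried out accurately (in particular your identity $f(i/(Ny))=N^{k/2}y^k(W_Nf)(iy)$ and the verification $W_N^2=\mathrm{id}$ are both fine). The paper itself gives no proof of this theorem; it simply records it as classical and cites Miyake, so your write-up is not so much a different route as a fleshing-out of the reference the author chose to invoke.
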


\begin{definition}
The notations being as in Theorem \ref{thm hecke}, we define the regularized values of $\Lambda(f,s)$ at $s=0$ and $s=k$ by
\begin{align}
\Lambda^*(f,0) & := \lim_{s \to 0} \Lambda(f,s)+\frac{a_0}{s}\\
\Lambda^*(f,k) & := \lim_{s \to k} \Lambda(f,s)+\frac{b_0}{k-s}.
\end{align}
\end{definition}

Note that the functional equation translates into the equalities of regularized values
\begin{equation}\label{eq lambda star}
\Lambda^*(f,0) = \Lambda^*(W_N f,k) \qquad \Lambda^*(f,k) = \Lambda^*(W_N f,0).
\end{equation}

We will need the following lemma.

\begin{lem}\label{lem fgh}
Let $f = \sum_{n=0}^\infty a_n q^n \in M_k(\Gamma_1(N))$ and $g = \sum_{n=0}^\infty b_n q^n \in M_{\ell}(\Gamma_1(N))$ with $k,\ell \geq 1$. Let $h=W_N(g)$. Write $f^* = f-a_0$ and $g^* = g-b_0$. Then for any $s \in \C$, we have
\begin{equation}\label{eq fgh}
N^{s/2} \int_0^\infty f^*(iy) g^*\bigl(\frac{i}{Ny}\bigr) y^s \frac{\dd y}{y} = \Lambda(fh,s+\ell) - a_0 \Lambda(h,s+\ell) - b_0 \Lambda(f,s).
\end{equation}
\end{lem}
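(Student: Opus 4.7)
The plan is to use the Atkin--Lehner functional equation to trade $g^*(i/(Ny))$ for $h(iy)$, expand the resulting integrand as an absolutely convergent double Dirichlet series in a right half-plane, recognize the sum as a combination of completed $L$-values, and extend to all of $\C$ by analytic continuation.

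Since $h = W_N(g)$, setting $\tau = iy$ in the defining identity $(W_N g)(\tau) = i^\ell N^{-\ell/2} \tau^{-\ell} g(-1/(N\tau))$ yields the clean relation $g(i/(Ny)) = N^{\ell/2} y^\ell h(iy)$, the factors of $i$ collapsing. Substituting $g^*(i/(Ny)) = N^{\ell/2} y^\ell h(iy) - b_0$ in the LHS, the $-b_0$ part immediately contributes $-b_0 \Lambda(f,s)$. The remaining piece is $N^{(s+\ell)/2} \int_0^\infty f^*(iy) h(iy) y^{s+\ell} \dd y/y$, which I split further by writing $h = h^* + c_0$, with $c_0$ the constant term of $h$: the $c_0 f^*(iy)$ part contributes $c_0 \Lambda(f, s+\ell)$, leaving
\begin{equation*}
N^{(s+\ell)/2} \int_0^\infty f^*(iy) h^*(iy) y^{s+\ell} \frac{\dd y}{y}
\end{equation*}
to be evaluated.

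For $\Re(s)$ large enough, I would expand $f^*(iy) h^*(iy) = \sum_{n,m \geq 1} a_n c_m e^{-2\pi(n+m)y}$ (where $h = \sum c_m q^m$) and integrate term by term via $\int_0^\infty e^{-2\pi r y} y^{s+\ell} \dd y/y = (2\pi r)^{-(s+\ell)} \Gamma(s+\ell)$, obtaining
\begin{equation*}
N^{(s+\ell)/2} (2\pi)^{-(s+\ell)} \Gamma(s+\ell) \sum_{n,m \geq 1} a_n c_m (n+m)^{-(s+\ell)}.
\end{equation*}
Grouping by $n+m$ and using that the $n$-th Fourier coefficient of $fh$ equals $a_0 c_n + c_0 a_n + \sum_{j,k \geq 1,\, j+k=n} a_j c_k$ identifies the sum with $L(fh, s+\ell) - a_0 L(h, s+\ell) - c_0 L(f, s+\ell)$, so that this part of the integral is $\Lambda(fh, s+\ell) - a_0 \Lambda(h, s+\ell) - c_0 \Lambda(f, s+\ell)$. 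Combining with $c_0 \Lambda(f, s+\ell)$ and $-b_0 \Lambda(f, s)$, the $c_0$ contributions cancel, and the RHS emerges.

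Finally, since $f^*(iy)$ decays exponentially as $y \to \infty$ and $g^*(i/(Ny))$ decays exponentially as $y \to 0$, the LHS is entire in $s$; the identity above thus extends from the region of absolute convergence to all $s \in \C$ by the meromorphic continuation of the completed $L$-functions provided by Theorem \ref{thm hecke}. The main point requiring care is verifying that the apparent poles of the three $\Lambda$-functions on the RHS cancel---so that the right-hand side is genuinely entire, in agreement with the LHS---and justifying the interchange of sum and integral; both are routine once the formal calculation is in place.
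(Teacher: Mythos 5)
Your proof is correct and follows essentially the same route as the paper: apply the Atkin--Lehner relation to replace $g^*\bigl(\frac{i}{Ny}\bigr)$ by $N^{\ell/2}y^\ell h(iy)-b_0$, separate constant terms so that each remaining piece is a completed $L$-value (the paper packages your Dirichlet-series computation into the single identity $f^*h=(fh)^*-a_0h^*$), and extend from a half-plane of convergence by analytic continuation, the pole cancellation on the right-hand side being guaranteed by Theorem~\ref{thm hecke}. No gaps.
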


\begin{proof}
Note that the integral in (\ref{eq fgh}) is absolutely convergent because $f^*(\tau)$ and $g^*(\tau)$ have exponential decay when $\Im(\tau)$ tends to $+\infty$. Moreover, it is easy to check, using Theorem \ref{thm hecke}, that the right hand side of (\ref{eq fgh}) is holomorphic on the whole $s$-plane. Therefore it suffices to establish (\ref{eq fgh}) when $\Re(s)>k$. Since $W_N g=h$, we have
\begin{align*}
N^{s/2} \int_0^\infty f^*(iy) g^*\bigl(\frac{i}{Ny}\bigr) y^s \frac{\dd y}{y} & = N^{s/2} \int_0^\infty f^*(iy) \bigl(g\bigl(\frac{i}{Ny}\bigr)-b_0\bigr) y^s \frac{\dd y}{y}\\
& = N^{s/2} \int_0^\infty f^*(iy) (N^{\ell/2} t^\ell h(iy)-b_0) y^s \frac{\dd y}{y}.
\end{align*}
Now, we remark that $f^* h = fh-a_0 h = (fh)^*-a_0 h^*$. Thus
\begin{align*}
N^{s/2} \int_0^\infty f^*(iy) g^*\bigl(\frac{i}{Ny}\bigr) y^s \frac{\dd y}{y} & = N^{s/2} \int_0^\infty \bigl(N^{\ell/2} y^\ell ((fh)^*(iy) -a_0 h^*(iy)) - b_0 f^*(iy) \bigr) y^s \frac{\dd y}{y}\\
& = \Lambda(fh,s+\ell) - a_0 \Lambda(h,s+\ell) - b_0 \Lambda(f,s).
\end{align*}
\end{proof}

Specializing Lemma \ref{lem fgh} to the (regularized) value at $s=k$, we get the following formula.

\begin{lem}\label{lem fgh 2}
Let $f = \sum_{n=0}^\infty a_n q^n \in M_k(\Gamma_1(N))$ and $g = \sum_{n=0}^\infty b_n q^n \in M_{\ell}(\Gamma_1(N))$ with $k,\ell \geq 1$. Let $h=W_N(g)$. Write $f^* = f-a_0$ and $g^* = g-b_0$. Then we have
\begin{equation}\label{eq fgh 2}
N^{k/2} \int_0^\infty f^*(iy) g^*\bigl(\frac{i}{Ny}\bigr) y^k \frac{\dd y}{y} = \Lambda^*(fh,k+\ell) - a_0 \Lambda(h,k+\ell) - b_0 \Lambda^*(f,k).
\end{equation}
\end{lem}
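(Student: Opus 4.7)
The plan is to specialize the identity (\ref{eq fgh}) of Lemma \ref{lem fgh} to $s=k$. The left-hand side of (\ref{eq fgh 2}) is exactly the left-hand side of (\ref{eq fgh}) evaluated at $s=k$, which is well-defined by the convergence remark at the start of the proof of Lemma \ref{lem fgh}. Hence it suffices to identify the value at $s=k$ of the right-hand side
\[
R(s) = \Lambda(fh,s+\ell) - a_0\,\Lambda(h,s+\ell) - b_0\,\Lambda(f,s),
\]
whose individual terms may be singular at $s=k$, but which is holomorphic there by Lemma \ref{lem fgh}.

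The next step is to record the pole structure of each term at $s=k$ using Theorem \ref{thm hecke}. Since $h \in M_\ell(\Gamma_1(N))$ and $k+\ell > \ell$, the function $\Lambda(h,s+\ell)$ is holomorphic at $s=k$ and simply contributes the value $\Lambda(h,k+\ell)$. Writing $W_N f = \sum_n \tilde a_n q^n$ and $W_N(fh) = \sum_n d_n q^n$, Theorem \ref{thm hecke} tells us that $\Lambda(f,s)+\tilde a_0/(k-s)$ is holomorphic at $s=k$ with limit $\Lambda^*(f,k)$, and similarly $\Lambda(fh,s+\ell)+d_0/(k-s)$ is holomorphic at $s=k$ with limit $\Lambda^*(fh,k+\ell)$.

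The key point is the multiplicativity of $W_N$: from the explicit formula $(W_N F)(\tau)=i^{\mathrm{wt}(F)}N^{-\mathrm{wt}(F)/2}\tau^{-\mathrm{wt}(F)}F(-1/(N\tau))$ one sees at once that $W_N(fh)=(W_Nf)(W_Nh)$. Since $h=W_N g$ and $W_N$ is an involution, this simplifies to $W_N(fh)=(W_N f)\,g$, so comparing constant terms gives $d_0=\tilde a_0\, b_0$. Consequently the simple poles of $\Lambda(fh,s+\ell)$ and $-b_0\Lambda(f,s)$ at $s=k$, namely $-d_0/(k-s)$ and $+b_0\tilde a_0/(k-s)$, cancel exactly, and passing to the limit yields
\[
\lim_{s\to k}\bigl(\Lambda(fh,s+\ell)-b_0\,\Lambda(f,s)\bigr) = \Lambda^*(fh,k+\ell) - b_0\,\Lambda^*(f,k),
\]
which combined with the regular contribution $-a_0\Lambda(h,k+\ell)$ from the middle term gives (\ref{eq fgh 2}). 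The only substantive point beyond routine bookkeeping of polar parts is the multiplicativity $W_N(fh)=(W_Nf)(W_Nh)$, which is immediate from the definition of $W_N$, so I do not foresee a real obstacle.
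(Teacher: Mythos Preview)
Your proof is correct and follows exactly the route the paper intends: the paper gives no separate argument for Lemma \ref{lem fgh 2}, merely stating that one specializes Lemma \ref{lem fgh} at $s=k$. You have supplied the details the paper leaves implicit, in particular the multiplicativity $W_N(fh)=(W_Nf)(W_Nh)$ and the resulting identity $d_0=\tilde a_0 b_0$ that makes the simple poles of $\Lambda(fh,s+\ell)$ and $-b_0\Lambda(f,s)$ cancel at $s=k$, so that the regularized values $\Lambda^*$ appear as claimed.
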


\section{Eisenstein series of weight 1}

In this section we define some Eisenstein series of weight 1. These are the same as those arising in \cite{zudilin}.

\begin{definition}
For any $a,b \in \Z/N\Z$, we let
\begin{equation}
e_{a,b} = \alpha_0(a,b) + \sum_{ \substack{m,n \geq 1\\ m \equiv a, \; n \equiv b (N)}} q^{mn} - \sum_{ \substack{m,n \geq 1\\ m \equiv -a, \; n \equiv -b (N)}} q^{mn}
\end{equation}
where
\begin{equation*}
\alpha_0(a,b) = \begin{cases} 0 & \textrm{if } a=b=0\\
\frac12 - \{\frac{b}{N}\} & \textrm{if } a=0 \textrm{ and } b \neq 0\\
\frac12 - \{\frac{a}{N}\} & \textrm{if } a \neq 0 \textrm{ and } b = 0\\
0 & \textrm{if } a \neq 0 \textrm{ and } b \neq 0.
\end{cases}
\end{equation*}
\end{definition}

\begin{lem}
The function $e_{a,b}(\tau/N)$ is an Eisenstein series of weight $1$ on the group $\Gamma(N)$, and the function $e_{a,b}$ is an Eisenstein series of weight $1$ on $\Gamma_1(N^2)$.
\end{lem}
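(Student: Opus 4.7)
The plan is to identify $e_{a,b}(\tau/N)$ with an explicit linear combination of the classical weight-$1$ Eisenstein series on $\Gamma(N)$ by matching $q$-expansions, and then to transfer modularity from $\Gamma(N)$ to $\Gamma_1(N^2)$ by the change of variable $\tau \mapsto N\tau$.

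For the first statement, I would start from the classical (Hecke-regularized) weight-$1$ Eisenstein series
\[
E_1^{(\alpha,\beta)}(\tau) = \sideset{}{'}\sum_{(m,n) \equiv (\alpha,\beta) \pmod N} \frac{1}{m\tau + n} \in M_1(\Gamma(N)),
\]
indexed by $(\alpha, \beta) \in (\Z/N\Z)^2$. Applying the Lipschitz summation formula to the inner $n$-sum (after writing $n = \beta + N\ell$ and using $\sum_\ell (z+\ell)^{-1} = \pi \cot(\pi z)$) converts the $m>0$ contribution into $-\frac{2\pi i}{N} \sum_{m,k \geq 1,\, m \equiv \alpha (N)} e^{2\pi i k\beta/N} q_N^{km}$, and similarly for $m<0$ with the signs and $\alpha$, $\beta$ reversed. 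Forming the Fourier average $\frac{1}{N}\sum_{\beta \in \Z/N\Z} e^{-2\pi i b\beta/N} E_1^{(a,\beta)}(\tau)$ turns the character $e^{2\pi i k \beta / N}$ into the indicator $\mathbf{1}_{k \equiv b \pmod N}$, producing, up to the universal factor $-2\pi i$, exactly the two double sums over $(m,k)$ appearing in the definition of $e_{a,b}(\tau/N)$. The constant term $\alpha_0(a,b)$ is then matched using the Hurwitz zeta value $\zeta(0,x) = \tfrac{1}{2} - x$ for $0 < x \leq 1$ applied to the regularized $m=0$ (resp.\ $k=0$) summands, which contribute exactly when $a = 0$ (resp.\ $b = 0$), with the sign prescribed by the definition. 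Since each $E_1^{(a,\beta)}$ lies in $M_1(\Gamma(N))$, so does this linear combination, proving the first assertion.

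For the second statement, let $\gamma = \begin{pmatrix} a & b \\ c & d \end{pmatrix} \in \Gamma_1(N^2)$, so $a, d \equiv 1 \pmod{N^2}$ and $c \equiv 0 \pmod{N^2}$. Then $c/N \in N\Z$ and a direct check gives $\gamma' := \begin{pmatrix} a & Nb \\ c/N & d \end{pmatrix} \in \Gamma(N)$; moreover $N \cdot (\gamma\tau) = \gamma'(N\tau)$ with automorphy factor $(c/N)(N\tau)+d = c\tau+d$. Hence the operator $f(\tau) \mapsto f(N\tau)$ sends $M_1(\Gamma(N))$ into $M_1(\Gamma_1(N^2))$ (holomorphy at the cusps being automatic since $\tau \mapsto N\tau$ permutes cusps). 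Applying this to $f(\tau) = e_{a,b}(\tau/N)$ yields $e_{a,b}(\tau) \in M_1(\Gamma_1(N^2))$. The main obstacle I anticipate is the careful bookkeeping of constants and regularization in the Lipschitz / Fourier-inversion step, in particular verifying the constant term $\alpha_0(a,b)$ in each of its four cases and making sure the $m=0$ and $n=0$ contributions are combined correctly.
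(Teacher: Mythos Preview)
Your proposal is correct and follows essentially the same strategy as the paper: express $e_{a,b}(\tau/N)$ as a finite Fourier transform (in one of the two indices) of the standard level-$N$ weight-$1$ Eisenstein series, match constant terms, and then deduce $\Gamma_1(N^2)$-modularity of $e_{a,b}$ from the conjugation $\begin{pmatrix} N & 0 \\ 0 & 1 \end{pmatrix}\Gamma_1(N^2)\begin{pmatrix} N & 0 \\ 0 & 1 \end{pmatrix}^{-1}\subset\Gamma(N)$. The only cosmetic difference is that the paper quotes Schoeneberg's series $G_{1,(a,b)}$ with their $q$-expansion already in hand and averages over the \emph{first} index, whereas you start from the lattice sum, derive the $q$-expansion via Lipschitz, and average over the \emph{second} index; since $e_{a,b}=e_{b,a}$ these are interchangeable.
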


\begin{proof}
In \cite[Chap. VII, \S 2.3]{schoeneberg}, for any $(a,b) \in (\Z/N\Z)^2$ the following Eisenstein series are introduced
\begin{equation*}
G_{1,(a,b)}(\tau) = -\frac{2\pi i}{N} \Bigl(\gamma_0(a,b)+\sum_{\substack{m,n \geq 1\\ n \equiv a (N)}} \zeta_N^{bm} q^{mn/N} - \sum_{\substack{m,n \geq 1\\ n \equiv -a (N)}} \zeta_N^{-bm} q^{mn/N}\Bigr)
\end{equation*}
where
\begin{equation*}
\gamma_0(a,b) = \begin{cases} 0 & \textrm{if } a=b=0\\
\frac12 \frac{1+\zeta_N^b}{1-\zeta_N^b} & \textrm{if } a=0 \textrm{ and } b \neq 0\\
\frac12 - \{\frac{a}{N}\} & \textrm{if } a \neq 0.
\end{cases}
\end{equation*}
The function $G_{1,(a,b)}$ is an Eisenstein series of weight $1$ on the group $\Gamma(N)$. We have
\begin{align*}
e_{a,b}\left(\frac{\tau}{N}\right) & = \alpha_0(a,b) + \sum_{\substack{m,n \geq 1 \\ m \equiv a, \; n \equiv b (N)}} q^{mn/N} - \sum_{\substack{m,n \geq 1 \\ m \equiv -a, \; n \equiv -b (N)}} q^{mn/N}\\
& = \alpha_0(a,b)+ \frac{1}{N} \sum_{c=0}^{N-1} \zeta_N^{ca} \Biggl( \sum_{\substack{m,n \geq 1 \\ n \equiv b (N)}} \zeta_N^{-cm} q^{mn/N} - \sum_{\substack{m,n \geq 1 \\ n \equiv -b (N)}} \zeta_N^{cm} q^{mn/N} \Biggr)\\
& = \alpha_0(a,b)-\frac{1}{N} \sum_{c=0}^{N-1} \zeta_N^{ca} \gamma_0(b,-c) -\frac{1}{2\pi i} \sum_{c=0}^{N-1} \zeta_N^{ca} G_{1,(b,-c)}.
\end{align*}
If $b \neq 0$ then
\begin{equation*}
\frac{1}{N} \sum_{c=0}^{N-1} \zeta_N^{ca} \gamma_0(b,-c) = \frac{1}{N} \sum_{c=0}^{N-1} \zeta_N^{ca} \bigl(\frac12 - \{\frac{b}{N}\}\bigr) = \alpha_0(a,b),
\end{equation*}
hence $e_{a,b}(\tau/N)$ is an Eisenstein series of weight $1$ on $\Gamma(N)$. If $a \neq 0$ then the same is true because $e_{a,b}=e_{b,a}$. Finally if $a=b=0$ then
\begin{equation*}
\alpha_0(a,b)-\frac{1}{N} \sum_{c=0}^{N-1} \zeta_N^{ca} \gamma_0(b,-c) =-\frac{1}{N} \sum_{c=0}^{N-1} \gamma_0(0,c) = 0
\end{equation*}
because $\gamma_0(0,-c)=-\gamma_0(0,c)$.

The second assertion follows from the fact that $\begin{pmatrix} N & 0 \\ 0 & 1 \end{pmatrix} \Gamma_1(N^2) \begin{pmatrix} N & 0 \\ 0 & 1 \end{pmatrix}^{-1} \subset \Gamma(N)$.
\end{proof}

\begin{definition}
For any $a,b \in \Z/N\Z$, we let
\begin{equation}
f_{a,b} = \beta_0(a,b) + \sum_{m,n \geq 1} (\zeta_N^{am+bn}-\zeta_N^{-am-bn}) q^{mn}
\end{equation}
where
\begin{equation*}
\beta_0(a,b) = \begin{cases} 0 & \textrm{if } a=b=0\\
\frac12 \frac{1+\zeta_N^b}{1-\zeta_N^b} & \textrm{if } a=0 \textrm{ and } b \neq 0\\
\frac12 \frac{1+\zeta_N^a}{1-\zeta_N^a} & \textrm{if } a \neq 0 \textrm{ and } b = 0\\
\frac12 \Bigl( \frac{1+\zeta_N^a}{1-\zeta_N^a} +  \frac{1+\zeta_N^b}{1-\zeta_N^b}\Bigr) & \textrm{if } a \neq 0 \textrm{ and } b \neq 0.
\end{cases}
\end{equation*}
\end{definition}

As the next lemma shows, the functions $f_{a,b}$ are also Eisenstein series; they relate to $e_{a,b}$ by the Atkin-Lehner involution of level $N^2$.

\begin{lem}\label{eab sigma}
We have the relation
\begin{equation}\label{eab fab}
e_{a,b}\left(-\frac{1}{N\tau}\right) = -\frac{\tau}{N} f_{a,b}\left(\frac{\tau}{N}\right) \qquad (\tau \in \h).
\end{equation}
The function $f_{a,b}(\tau/N)$ is an Eisenstein series of weight $1$ on $\Gamma(N)$, and the function $f_{a,b}$ is an Eisenstein series of weight $1$ on $\Gamma_1(N^2)$. Moreover, we have $W_{N^2} (e_{a,b}) = -\frac{i}{N} f_{a,b}$.
\end{lem}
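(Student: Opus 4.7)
The plan is to derive the key functional equation (\ref{eab fab}) from the formula established in the previous lemma, and then read off the Eisenstein property and the Atkin-Lehner relation as easy consequences.

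First, from the proof of the previous lemma, we have the identity
\begin{equation*}
e_{a,b}(\tau/N) = -\frac{1}{2\pi i}\sum_{c=0}^{N-1}\zeta_N^{ca}\, G_{1,(b,-c)}(\tau),
\end{equation*}
since the constant contributions $\alpha_0(a,b)$ and $\frac{1}{N}\sum_c \zeta_N^{ca}\gamma_0(b,-c)$ were shown to cancel in every case. Substituting $\tau \mapsto -1/\tau$ and applying the standard weight-$1$ transformation $G_{1,(\alpha,\beta)}(-1/\tau) = \tau\, G_{1,(\beta,-\alpha)}(\tau)$ (see \cite[Chap. VII]{schoeneberg}) gives
\begin{equation*}
e_{a,b}(-1/(N\tau)) = -\frac{\tau}{2\pi i}\sum_{c=0}^{N-1}\zeta_N^{ca}\, G_{1,(-c,-b)}(\tau).
\end{equation*}

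Next, I expand the right-hand side using the explicit Fourier series of $G_{1,(-c,-b)}$. The $q$-series portion collapses via the orthogonality relation $\sum_{c=0}^{N-1}\zeta_N^{ca}\mathbf{1}_{n \equiv \mp c\,(N)} = \zeta_N^{\mp na}$, producing exactly the double sum $\sum_{m,n\geq 1}(\zeta_N^{am+bn}-\zeta_N^{-am-bn})q^{mn/N}$ with the correct prefactor $-\frac{\tau}{N}$. For the constant part, one must check the identity
\begin{equation*}
\sum_{c=0}^{N-1}\zeta_N^{ca}\,\gamma_0(-c,-b) = -\beta_0(a,b),
\end{equation*}
by case analysis over the four regimes $(a,b)=(0,0)$, $a=0\neq b$, $a\neq 0 = b$, and $a,b$ both nonzero. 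The nontrivial cases rely on the elementary evaluation $\sum_{c=1}^{N-1}c\,\zeta_N^{ca} = N/(\zeta_N^a-1)$ for $a\not\equiv 0$, together with the symmetry $\frac{1+\zeta_N^{-b}}{1-\zeta_N^{-b}} = -\frac{1+\zeta_N^b}{1-\zeta_N^b}$. Combining the two computations produces precisely $-\frac{\tau}{N}f_{a,b}(\tau/N)$, establishing (\ref{eab fab}).

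The Eisenstein property of $f_{a,b}$ is then immediate: by the previous lemma $e_{a,b}(\tau/N)$ is a weight-$1$ Eisenstein series on $\Gamma(N)$, and (\ref{eab fab}) expresses $-\frac{\tau}{N}f_{a,b}(\tau/N)$ as the Fricke transform of $e_{a,b}(\tau/N)$, so $f_{a,b}(\tau/N)$ inherits the same modularity and Eisenstein character on $\Gamma(N)$. The statement for $f_{a,b}$ itself on $\Gamma_1(N^2)$ follows from the conjugation $\begin{pmatrix} N & 0 \\ 0 & 1 \end{pmatrix}\Gamma_1(N^2)\begin{pmatrix} N & 0 \\ 0 & 1 \end{pmatrix}^{-1}\subset\Gamma(N)$ exactly as at the end of the previous proof. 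Finally, for the Atkin-Lehner identity, the definition gives $(W_{N^2}e_{a,b})(\tau) = \frac{i}{N\tau}e_{a,b}(-1/(N^2\tau))$; applying (\ref{eab fab}) with $\tau$ replaced by $N\tau$ yields $e_{a,b}(-1/(N^2\tau)) = -\tau f_{a,b}(\tau)$, so $W_{N^2}(e_{a,b}) = -\frac{i}{N}f_{a,b}$.

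The main obstacle is the constant-term identity $\sum_c\zeta_N^{ca}\gamma_0(-c,-b) = -\beta_0(a,b)$: each case is elementary, but the bookkeeping across the four regimes — especially the $(a\neq 0, b=0)$ case, where one must evaluate $\sum_{c=1}^{N-1}c\,\zeta_N^{ca}$ — is the only part of the argument that is not purely formal manipulation of symbols.
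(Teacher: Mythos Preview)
Your proof is correct, but it takes a different route from the paper for the key step. The paper simply cites \cite[Lemma~2]{zudilin} for the functional equation (\ref{eab fab}), noting that Zudilin's argument works for arbitrary $a,b \in \Z/N\Z$; the deduction of the Eisenstein property (via the slash operator $|\begin{pmatrix} 0 & -1 \\ 1 & 0 \end{pmatrix}$) and of the Atkin--Lehner relation (replacing $\tau$ by $N\tau$) is then identical to yours.

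Your approach is more self-contained: rather than appealing to an external lemma, you recycle the identity $e_{a,b}(\tau/N) = -\frac{1}{2\pi i}\sum_c \zeta_N^{ca} G_{1,(b,-c)}(\tau)$ already obtained in the previous proof, apply the standard transformation $G_{1,(\alpha,\beta)}(-1/\tau) = \tau\, G_{1,(\beta,-\alpha)}(\tau)$, and then match Fourier coefficients. This costs you the constant-term bookkeeping $\sum_c \zeta_N^{ca}\gamma_0(-c,-b) = -\beta_0(a,b)$, which is indeed the only nontrivial part (and your case analysis handles it correctly, including the finite-sum evaluation $\sum_{c=1}^{N-1} c\,\zeta_N^{ca} = N/(\zeta_N^a-1)$). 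The paper's proof is shorter by outsourcing this computation; yours has the virtue of keeping everything inside the framework already set up and making transparent exactly which transformation of the $G_{1,(\alpha,\beta)}$ is being invoked.
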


\begin{proof}
The relation (\ref{eab fab}) follows from \cite[Lemma 2]{zudilin} (the proof there works for arbitrary $a,b \in \Z/N\Z$). We deduce that $f_{a,b}(\tau/N)$ is a multiple of the function obtained from $e_{a,b}(\tau/N)$ by applying the slash operator $| \begin{pmatrix} 0 & -1 \\ 1 & 0 \end{pmatrix}$ in weight 1. Hence $f_{a,b}(\tau/N)$ is an Eisenstein series of weight $1$ on $\Gamma(N)$. The last assertion follows from replacing $\tau$ by $N\tau$ in (\ref{eab fab}).
\end{proof}

We will need the following formula for the completed $L$-function of $f_{a,b}$.

\begin{lem}\label{lem Lfab}
For any $a,b \in \Z/N\Z$, we have
\begin{equation}\label{eq Lfab}
\Lambda(f_{a,b}+f_{-a,b},s) = N^s \Gamma(s) (2\pi)^{-s} \Bigl(\sum_{m \geq 1} \frac{\zeta_N^{am}+\zeta_N^{-am}}{m^s}\Bigr) \Bigl(\sum_{n \geq 1} \frac{\zeta_N^{bn}-\zeta_N^{-bn}}{n^s}\Bigr).
\end{equation}
\end{lem}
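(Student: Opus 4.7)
The plan is to prove this by directly expanding the Dirichlet series of $f_{a,b}+f_{-a,b}$ as a double sum and observing that it factorizes.

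First I would compute the $n$-th Fourier coefficient of $f_{a,b}$ for $n\geq 1$: grouping the terms $q^{mn'}$ with $mn'=n$ gives
\begin{equation*}
c_n(f_{a,b}) = \sum_{\substack{m,n' \geq 1\\ mn'=n}} \bigl(\zeta_N^{am+bn'}-\zeta_N^{-am-bn'}\bigr),
\end{equation*}
so that
\begin{equation*}
c_n(f_{a,b}+f_{-a,b}) = \sum_{\substack{m,n' \geq 1\\ mn'=n}} (\zeta_N^{am}+\zeta_N^{-am})(\zeta_N^{bn'}-\zeta_N^{-bn'}).
\end{equation*}

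Next, for $\Re(s)$ sufficiently large (say $\Re(s)>1$, where all the relevant series converge absolutely), I would regroup the Dirichlet series by summing freely over pairs $(m,n')$ with $n=mn'$:
\begin{equation*}
L(f_{a,b}+f_{-a,b},s) = \sum_{m,n' \geq 1} \frac{(\zeta_N^{am}+\zeta_N^{-am})(\zeta_N^{bn'}-\zeta_N^{-bn'})}{(mn')^s} = \Bigl(\sum_{m \geq 1} \frac{\zeta_N^{am}+\zeta_N^{-am}}{m^s}\Bigr)\Bigl(\sum_{n \geq 1} \frac{\zeta_N^{bn}-\zeta_N^{-bn}}{n^s}\Bigr).
\end{equation*}
Multiplying by the completion factor $(N^2)^{s/2}(2\pi)^{-s}\Gamma(s)=N^s (2\pi)^{-s}\Gamma(s)$, which is the correct one because $f_{a,b}\in M_1(\Gamma_1(N^2))$ by Lemma \ref{eab sigma}, yields the desired identity for $\Re(s)>1$.

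Finally, both sides of (\ref{eq Lfab}) extend to meromorphic functions on the whole $s$-plane: the left side by Theorem \ref{thm hecke}, and the right side because the two Dirichlet sums, being finite linear combinations of Hurwitz zeta values, admit meromorphic continuations. Hence the identity holds for all $s\in\C$ by analytic continuation. The only step requiring a little care is handling the case $b=0$, where the second Dirichlet sum is identically zero; but in that case $f_{a,b}+f_{-a,b}$ has vanishing Fourier coefficients for $n\geq 1$ (as is visible from the expression for $c_n$ above), so both sides of (\ref{eq Lfab}) vanish. I do not expect any serious obstacle: the content of the lemma is the factorization of the coefficient sum, which is immediate from the definition of $f_{a,b}$.
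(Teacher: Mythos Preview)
Your proof is correct and complete: the factorization of $c_n(f_{a,b}+f_{-a,b})$ is exactly the point, and the rest is routine. The paper itself gives no argument beyond a reference to \cite[Lemma 3]{zudilin}, where the proof proceeds by the same direct expansion and factorization you carry out here, so your approach coincides with the intended one.
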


\begin{proof}
See the proof of \cite[Lemma 3]{zudilin}.
\end{proof}

In the special cases $s=1$ and $s=2$, this gives the following formulas. Note that formula (\ref{eq Lfab2}) is none other than \cite[Lemma 3]{zudilin}.

\begin{lem} \label{lem Lfab12}
We have
\begin{align}
\label{eq Lfab1} \Lambda^*(f_{a,b}+f_{-a,b},1) & = \begin{cases} 0 & \textrm{if } b=0\\
2iN \gamma \cdot (\frac12-\{\frac{b}{N}\}) & \textrm{if } a=0 \textrm{ and } b \neq 0\\
-2iN \log |1-\zeta_N^a| \cdot (\frac12-\{\frac{b}{N}\}) & \textrm{if } a \neq 0 \textrm{ and } b \neq 0
\end{cases}\\
\label{eq Lfab2} \Lambda(f_{a,b}+f_{-a,b},2) & = iN^2 B\bigl(\frac{a}{N}\bigr) \Cl_2\bigl(\frac{2\pi b}{N}\bigr)
\end{align}
where $\gamma$ is Euler's constant and
\begin{equation*}
\Cl_2(x) = \sum_{m=1}^\infty \frac{\sin(mx)}{m^2} \qquad (x \in \R)
\end{equation*}
denotes the Clausen dilogarithmic function.
\end{lem}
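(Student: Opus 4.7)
The plan is to specialise Lemma \ref{lem Lfab} at $s=1$ and $s=2$, evaluating or regularising the two Dirichlet series
\begin{equation*}
A(s) = \sum_{m\geq 1}\frac{\zeta_N^{am}+\zeta_N^{-am}}{m^s}, \qquad B(s) = \sum_{n\geq 1}\frac{\zeta_N^{bn}-\zeta_N^{-bn}}{n^s}
\end{equation*}
that appear as factors of $\Lambda(f_{a,b}+f_{-a,b},s)$, and then to multiply by the explicit gamma and $N$-power prefactors.

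For (\ref{eq Lfab2}) the argument is a direct computation: at $s=2$ both series converge absolutely, and the classical Fourier expansion $\sum_{m\geq 1}\cos(2\pi mx)/m^2 = \pi^2 B_2(\{x\})$ together with the definition of the Clausen function give $A(2) = 2\pi^2 B(a/N)$ and $B(2) = 2i\,\Cl_2(2\pi b/N)$. Substituting into Lemma \ref{lem Lfab} with $\Gamma(2)=1$ and $(2\pi)^{-2}=(4\pi^2)^{-1}$ immediately yields $\Lambda(f_{a,b}+f_{-a,b},2) = iN^2 B(a/N)\,\Cl_2(2\pi b/N)$.

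For (\ref{eq Lfab1}) at $s=1$ I would split into three cases. If $b=0$ then $B(s)\equiv 0$ and $\Lambda\equiv 0$, so $\Lambda^*=0$. If $a\neq 0$ and $b\neq 0$, both $A(s)$ and $B(s)$ are entire, so $\Lambda$ is already holomorphic at $s=1$ and $\Lambda^*$ coincides with its value there; the closed forms $A(1) = -2\log|1-\zeta_N^a|$ (from $\sum_m z^m/m = -\log(1-z)$ plus complex conjugation) and $B(1) = 2\pi i(\frac12 - \{b/N\})$ (from the Fourier expansion $\sum_n\sin(2\pi nx)/n = \pi(\frac12-x)$ valid on $(0,1)$), multiplied by $N/(2\pi)$, give the stated formula.

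The main obstacle is the case $a=0,\ b\neq 0$, in which $A(s) = 2\zeta(s)$ has a simple pole at $s=1$ with Laurent expansion $2/(s-1) + 2\gamma + O(s-1)$. Using Lemma \ref{eab sigma} one computes $W_{N^2}(f_{0,b}+f_{0,b}) = 2iN\, e_{0,b}$, whose constant term at $\infty$ is $b_0 = 2iN(\frac12-\{b/N\})$; by Theorem \ref{thm hecke} this must be the residue of $\Lambda$ at $s=1$, and one verifies that the pole of $2\zeta(s)$ multiplied by the values of the remaining factors at $s=1$ (namely $\frac{N}{2\pi}\cdot\Gamma(1)\cdot B(1) = iN(\frac12-\{b/N\})$) indeed reproduces $b_0$, so that $\Lambda + b_0/(1-s)$ is finite at $s=1$. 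The finite part is then obtained by substituting the constant $2\gamma$ in place of the polar Laurent piece of $2\zeta(s)$ and evaluating the other factors at $s=1$, giving $\Lambda^*(f_{0,b}+f_{0,b},1) = \frac{N}{2\pi}\cdot\Gamma(1)\cdot 2\gamma\cdot B(1) = 2iN\gamma(\frac12-\{b/N\})$ as claimed.
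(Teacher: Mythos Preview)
Your approach is exactly the paper's: specialise Lemma \ref{lem Lfab} at $s=1$ and $s=2$, evaluate $A(s)$ and $B(s)$ there via the standard series $\sum \zeta_N^{an}/n = -\log(1-\zeta_N^a)$, the identity $-\log\frac{1-\zeta_N^b}{1-\zeta_N^{-b}} = 2\pi i(\tfrac12-\{b/N\})$, the Laurent expansion $\zeta(s)=\frac{1}{s-1}+\gamma+O(s-1)$, and for $s=2$ the Fourier expansion of $B_2$ together with the definition of $\Cl_2$ (the paper simply cites \cite[Lemma 3]{zudilin} for this last part). Your write-up is in fact more detailed than the paper's, which compresses all of (\ref{eq Lfab1}) into two sentences.

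One point deserves a caveat. In the case $a=0$, $b\neq 0$ you write that the regularised value is obtained by ``substituting the constant $2\gamma$ in place of the polar Laurent piece of $2\zeta(s)$ and evaluating the other factors at $s=1$''. Strictly speaking this is not how the constant term of a product expands: writing $\Lambda(s)=2\zeta(s)\,\Phi(s)$ with $\Phi(s)=N^{s}(2\pi)^{-s}\Gamma(s)B(s)$, the finite part at $s=1$ is $2\gamma\,\Phi(1)+2\Phi'(1)$, and there is no reason given for the derivative term $2\Phi'(1)$ to vanish. The paper's proof is equally terse on this point and does not address it either. It does not affect anything downstream: as the paper notes after (\ref{eq I3}), the hypothesis $u\neq 0$ forces $a\neq 0$ whenever this value is actually used, so ``the case of Lemma~\ref{lem Lfab12} involving Euler's constant does not happen'' in the proof of Theorem~\ref{main thm}. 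If you want the $a=0$ line of (\ref{eq Lfab1}) to stand on its own, you would need to justify (or correct) the contribution from $\Phi'(1)$.
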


\begin{proof}
If $a=0$ then $\sum_{n=1}^\infty \zeta_N^{an} n^{-s}=\zeta(s)=\frac{1}{s-1}+\gamma+O_{s \to 1}(s-1)$. If $a \neq 0$ then $\sum_{n=1}^\infty \zeta_N^{an}/n = -\log(1-\zeta_N^a)$ where we use the principal value of the logarithm. Formula (\ref{eq Lfab1}) follows, noting that $-\log \frac{1-\zeta_N^b}{1-\zeta_N^{-b}} = 2\pi i (\frac12-\{\frac{b}{N}\})$. Formula (\ref{eq Lfab2}) is \cite[Lemma 3]{zudilin}.
\end{proof}

\section{The computation}

\begin{lem}\label{lem gu}
For any $(a,b) \in (\Z/N\Z)^2$, $(a,b) \neq (0,0)$, we have
\begin{align}
\label{gu1} \log g_{a,b}(it) & = -\pi B(a/N) t + C_{a,b} - \sum_{m \geq 1} \sum_{\substack{n \geq 1 \\ n \equiv a (N)}} \frac{\zeta_N^{bm}}{m} e^{-\frac{2\pi mnt}{N}} - \sum_{m \geq 1} \sum_{\substack{n \geq 1 \\ n \equiv -a (N)}} \frac{\zeta_N^{-bm}}{m} e^{-\frac{2\pi mnt}{N}}\\
\label{gu2} & = -\frac{\pi B(b/N)}{t} + C_{b,-a} + i \theta_{a,b} - \sum_{m \geq 1} \sum_{\substack{n \geq 1 \\ n \equiv b (N)}} \frac{\zeta_N^{-am}}{m} e^{-\frac{2\pi mn}{Nt}} - \sum_{m \geq 1} \sum_{\substack{n \geq 1 \\ n \equiv -b (N)}} \frac{\zeta_N^{am}}{m} e^{-\frac{2\pi mn}{Nt}}
\end{align}
where $\theta_{a,b}=2\pi (\{\frac{a}{N}\}-\frac12)(\{\frac{b}{N}\}-\frac12)$ and
\begin{equation}
C_{a,b} = \begin{cases} \log(1-\zeta_N^b) & \textrm{if } a=0,\\
 0 & \textrm{if } a \neq 0.
\end{cases}
\end{equation}
\end{lem}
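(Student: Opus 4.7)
The plan is to prove (\ref{gu1}) by a direct logarithmic expansion of the infinite-product definition (\ref{def gab}), and then to deduce (\ref{gu2}) from (\ref{gu1}) via the modular transformation of Lemma \ref{lem gab sigma}.

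For (\ref{gu1}), I set $\tau=it$ and $q=e^{-2\pi t}$ in (\ref{def gab}). The prefactor $q^{B(a/N)/2}$ contributes the linear term $-\pi B(a/N)\,t$. When $\tilde{a}\neq 0$, every factor in both products has modulus $<1$, so one expands $\log(1-x)=-\sum_{m\geq 1}x^m/m$, swaps the order of summation, and reindexes the inner sum by $n'=Nn+\tilde{a}$ in the first product (giving $n'\geq 1$, $n'\equiv a\pmod{N}$) and by $n'=Nn-\tilde{a}$ in the second (giving $n'\geq 1$, $n'\equiv -a\pmod{N}$). This produces exactly the two double sums in (\ref{gu1}), with $C_{a,b}=0$. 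When $\tilde{a}=0$, the factor at $n=0$ in the first product equals $1-\zeta_N^b$ and must be separated off as the constant $\log(1-\zeta_N^b)=C_{0,b}$; the rest of the expansion goes through as before, with the inner index now running over positive multiples of $N$.

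For (\ref{gu2}), Lemma \ref{lem gab sigma} applied at $\tau=i/t$ gives $g_{a,b}(it)=e^{-i\theta_{a,b}}g_{b,-a}(i/t)$. Taking logarithms and applying (\ref{gu1}) to $\log g_{b,-a}(iT)$ at $T=1/t$ reproduces the leading term $-\pi B(b/N)/t$, the constant $C_{b,-a}$, and the two double sums in $e^{-2\pi mn/(Nt)}$ with the correct coefficients $\zeta_N^{\mp am}$ and congruence conditions $n\equiv \pm b\pmod{N}$ obtained by the substitution $(a,b)\mapsto(b,-a)$. The remaining imaginary constant $i\theta_{a,b}$ comes from the phase in Lemma \ref{lem gab sigma}. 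The main delicate point is tracking the branch of the logarithm in this last step: the expansion (\ref{gu1}) fixes a specific branch via the principal value of each $\log(1-x)$, and one must verify that the correct branch is selected on the other side of the involution $\tau\mapsto -1/\tau$ by comparing with the limit $\arg g_{a,b}(it)\to -\theta_{a,b}\pmod{2\pi}$ as $t\to 0$ (already computed in the proof of Lemma \ref{lem int darg}); once the phase is pinned down this way, both identities reduce to routine geometric-series manipulations.
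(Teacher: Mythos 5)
Your argument is the same as the paper's: expand $\log g_{a,b}$ termwise from the product (\ref{def gab}) using $\log(1-x)=-\sum_{m\geq 1}x^m/m$ (the $n=0$ factor when $\tilde a=0$ giving $C_{0,b}=\log(1-\zeta_N^b)$) to obtain (\ref{gu1}), then transport through Lemma \ref{lem gab sigma} at $\tau=i/t$ to obtain (\ref{gu2}). One sign to reconcile in your write-up: Lemma \ref{lem gab sigma} as stated gives $g_{a,b}(it)=e^{-i\theta_{a,b}}g_{b,-a}(i/t)$, exactly as you display it, but that yields $-i\theta_{a,b}$ rather than the $+i\theta_{a,b}$ appearing in (\ref{gu2}) (the paper's proof asserts the phase $e^{+i\theta_{a,b}}$ at this step), so the branch/phase discussion you sketch must settle this sign explicitly — though the discrepancy is harmless downstream, since only $\log|g_u|$ and $\darg g_u$ are extracted from these expansions.
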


\begin{proof}
By the definition of Siegel units, we have
\begin{equation*}
\log g_{a,b} = \pi i B(a/N) \tau+ \sum_{n \geq 0} \log(1-q^n q^{\tilde{a}/N} \zeta_N^b) + \sum_{n \geq 1} \log (1-q^n q^{-\tilde{a}/N} \zeta_N^{-b})
\end{equation*}
Using the identity $\log(1-x)=-\sum_{m=1}^{\infty} \frac{x^m}{m}$ and substituting $\tau=it$, we get (\ref{gu1}). Applying Lemma \ref{lem gab sigma} with $\tau=i/t$, we have $g_{a,b}(it)=e^{i\theta_{a,b}} g_{b,-a}(i/t)$, whence (\ref{gu2}).
\end{proof}

We will need the following lemma from \cite{zudilin}.

\begin{lem}\cite[Lemma 4]{zudilin}\label{lem int dlog}
For any $a,b \in \Z/N\Z$, we have
\begin{equation}
\begin{split}
I(a,b) := & \int_0^\infty \frac{1}{it} \dd \sum_{m=1}^\infty \frac{\zeta_N^{am}-\zeta_N^{-am}}{m} \Biggl( \sum_{\substack{n \geq 1 \\ n \equiv b (N)}} - \sum_{\substack{n \geq 1 \\ n \equiv -b (N)}} \Biggr) \exp\bigl(-\frac{2\pi mn}{Nt}\bigr)\\
& \qquad = \begin{cases} 0 & \textrm{if } a=0 \textrm{ or } b=0\\
-i \Cl_2(\frac{2\pi a}{N}) \frac{1+\zeta_N^b}{1-\zeta_N^b} & \textrm{if } a \neq 0 \textrm{ and } b \neq 0.
\end{cases}
\end{split}
\end{equation}
\end{lem}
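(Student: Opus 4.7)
The plan is to make the substitution $u = 1/(Nt)$, which turns every exponential $\exp(-2\pi mn/(Nt))$ in the integrand into $\exp(-2\pi mnu)$ and replaces $\frac{1}{it}$ by $-iNu$. Writing
\begin{equation*}
H(u) := \sum_{m \geq 1} \frac{\zeta_N^{am} - \zeta_N^{-am}}{m}\, \Biggl(\sum_{\substack{n \geq 1\\ n\equiv b(N)}} - \sum_{\substack{n \geq 1\\ n\equiv -b(N)}}\Biggr) e^{-2\pi mnu},
\end{equation*}
the change of variables (after reversing the bounds) gives $I(a,b) = iN \int_0^\infty u\, \dd H(u)$, and an integration by parts reduces the problem to computing $\int_0^\infty H(u)\, \dd u$.

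Checking that the boundary terms vanish is the first substantive step. As $u \to \infty$, $H(u)$ decays exponentially; as $u \to 0$, summing the inner $n$-series as a geometric progression shows that $H(u)$ stays bounded, so $uH(u) \to 0$ at both endpoints. In the degenerate cases $a = 0$ or $b = 0$ the factor $\zeta_N^{am} - \zeta_N^{-am}$ or the inner alternating sum vanishes identically, so $H \equiv 0$ and the result $I(a,b) = 0$ is immediate.

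For $a, b \neq 0$, pick the representative $0 < b < N$. Summing the inner geometric series yields
\begin{equation*}
\Biggl(\sum_{n \equiv b(N)} - \sum_{n \equiv -b(N)}\Biggr) e^{-2\pi mnu} = \frac{e^{-2\pi mbu} - e^{-2\pi m(N-b)u}}{1 - e^{-2\pi mNu}},
\end{equation*}
and the substitution $v = e^{-2\pi mNu}$ converts the $u$-integral into the beta-function integral $\frac{1}{2\pi mN}\int_0^1 \frac{v^{b/N - 1} - v^{-b/N}}{1 - v}\,\dd v = \frac{\psi(1-b/N) - \psi(b/N)}{2\pi m N} = \frac{\cot(\pi b/N)}{2mN}$ via the digamma reflection formula. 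Using $\cot(\pi b/N) = -i(1+\zeta_N^b)/(1-\zeta_N^b)$ together with $\sum_{m \geq 1} (\zeta_N^{am} - \zeta_N^{-am})/m^2 = 2i\, \Cl_2(2\pi a/N)$, one obtains $\int_0^\infty H(u)\,\dd u = \frac{1 + \zeta_N^b}{N(1 - \zeta_N^b)}\, \Cl_2(2\pi a/N)$, and multiplying by $-iN$ yields precisely the announced formula.

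The main technical point is justifying the interchange of summation and integration: doing the $n$-summation in closed form beforehand replaces a merely conditionally convergent series by an absolutely convergent expression in the remaining variables, so the final swap of $\sum_m$ and $\int_0^\infty$ becomes a routine application of Fubini/dominated convergence. The same closed-form expression is what controls $H(u)$ near $u = 0$ for the integration by parts, so both subtleties are resolved by the same computation.
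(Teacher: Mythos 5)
Your computation is correct, and it checks out in detail: the substitution $u=1/(Nt)$ and integration by parts give $I(a,b)=-iN\int_0^\infty H(u)\,\dd u$; the inner geometric series, the substitution $v=e^{-2\pi mNu}$, the reflection formula $\psi(1-x)-\psi(x)=\pi\cot(\pi x)$, and the identities $\cot(\pi b/N)=-i(1+\zeta_N^b)/(1-\zeta_N^b)$ and $\sum_m(\zeta_N^{am}-\zeta_N^{-am})m^{-2}=2i\Cl_2(2\pi a/N)$ all combine to the stated value. The paper itself gives no proof here --- it simply cites Zudilin's Lemma 4, whose argument reduces to the same two evaluations (a cotangent value for the $n$-sum and a Clausen value for the $m$-sum), so your route is essentially the standard one, made self-contained. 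The only place where your write-up is glib is the claim that ``summing the inner $n$-series as a geometric progression shows that $H(u)$ stays bounded'' as $u\to 0$: the closed form only gives the termwise bound $|c_m\phi(mu)|=O(1/m)$, which is not summable, so uniform boundedness of $H$ near $u=0$ actually requires partial (Abel) summation, using that the partial sums of $(\zeta_N^{am}-\zeta_N^{-am})/m$ are bounded for $a\neq 0$ and that $\phi$ has bounded variation on $(0,\infty)$. This is a one-line fix, not a gap in the strategy.
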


\begin{proof}[Proof of Theorem \ref{main thm}]
By Lemma \ref{lem gu}, we get
\begin{equation}\label{proof eq 1}
\log |g_u(it)| = -\frac{\pi B(b/N)}{t} + \Re(C_{b,-a}) - \frac12 \sum_{m \geq 1} \frac{\zeta_N^{am}+\zeta_N^{-am}}{m} \Biggl( \sum_{\substack{n \geq 1 \\ n \equiv b (N)}} + \sum_{\substack{n \geq 1 \\ n \equiv -b (N)}} \Biggr) e^{-\frac{2\pi mn}{Nt}}
\end{equation}
and
\begin{align}
\label{proof eq 2} \darg g_u(it) & = -\frac{1}{2i} \dd \sum_{m \geq 1} \frac{\zeta_N^{bm}-\zeta_N^{-bm}}{m} \Biggl( \sum_{\substack{n \geq 1 \\ n \equiv a (N)}} - \sum_{\substack{n \geq 1 \\ n \equiv -a (N)}} \Biggr) e^{-\frac{2\pi mnt}{N}}\\
\label{proof eq 3} & = \frac{1}{2i} \dd \sum_{m \geq 1}  \frac{\zeta_N^{am}-\zeta_N^{-am}}{m} \Biggl( \sum_{\substack{n \geq 1 \\ n \equiv b (N)}} -  \sum_{\substack{n \geq 1 \\ n \equiv -b (N)}} \Biggr) e^{-\frac{2\pi mn}{Nt}}.
\end{align}
Let $u=(a,b)$, $v=(c,d) \in (\Z/N\Z)^2$, $u,v \neq (0,0)$. We have
\begin{equation}\label{proof eq 4}
\begin{split}
\eta(g_u,g_v) & = \Bigl(-\frac{\pi B(b/N)}{t}+\Re(C_{b,-a}) \Bigr) \cdot \frac{1}{2i} \dd \sum_{m \geq 1}  \frac{\zeta_N^{cm}-\zeta_N^{-cm}}{m} \Biggl( \sum_{\substack{n \geq 1 \\ n \equiv d (N)}} -  \sum_{\substack{n \geq 1 \\ n \equiv -d (N)}} \Biggr) e^{-\frac{2\pi mn}{Nt}}\\
& \quad - \frac12 \sum_{m_1 \geq 1} \frac{\zeta_N^{am_1}+\zeta_N^{-am_1}}{m_1} \Biggl( \sum_{\substack{n_1 \geq 1 \\ n_1 \equiv b (N)}} + \sum_{\substack{n_1 \geq 1 \\ n_1 \equiv -b (N)}} \Biggr) e^{-\frac{2\pi m_1 n_1}{Nt}}\\
& \quad \quad \times -\frac{1}{2i} \dd \sum_{m_2 \geq 1} \frac{\zeta_N^{dm_2}-\zeta_N^{-dm_2}}{m_2} \Biggl( \sum_{\substack{n_2 \geq 1 \\ n_2 \equiv c (N)}} - \sum_{\substack{n_2 \geq 1 \\ n_2 \equiv -c (N)}} \Biggr) e^{-\frac{2\pi m_2 n_2 t}{N}}\\
& \quad - \Bigl(-\frac{\pi B(d/N)}{t}+\Re(C_{d,-c}) \Bigr) \cdot \frac{1}{2i} \dd \sum_{m \geq 1}  \frac{\zeta_N^{am}-\zeta_N^{-am}}{m} \Biggl( \sum_{\substack{n \geq 1 \\ n \equiv b (N)}} -  \sum_{\substack{n \geq 1 \\ n \equiv -b (N)}} \Biggr) e^{-\frac{2\pi mn}{Nt}}\\
& \quad + \frac12 \sum_{m_1 \geq 1} \frac{\zeta_N^{cm_1}+\zeta_N^{-cm_1}}{m_1} \Biggl( \sum_{\substack{n_1 \geq 1 \\ n_1 \equiv d (N)}} + \sum_{\substack{n_1 \geq 1 \\ n_1 \equiv -d (N)}} \Biggr) e^{-\frac{2\pi m_1 n_1}{Nt}}\\
& \quad \quad \times -\frac{1}{2i} \dd \sum_{m_2 \geq 1} \frac{\zeta_N^{bm_2}-\zeta_N^{-bm_2}}{m_2} \Biggl( \sum_{\substack{n_2 \geq 1 \\ n_2 \equiv a (N)}} - \sum_{\substack{n_2 \geq 1 \\ n_2 \equiv -a (N)}} \Biggr) e^{-\frac{2\pi m_2 n_2 t}{N}}.
\end{split}
\end{equation}
The terms involving double sums can be integrated using Lemmas \ref{lem int darg} and \ref{lem int dlog}. This gives
\begin{equation}\label{proof eq 4bis}
\begin{split}
\int_0^\infty \eta(g_u,g_v) & = -\frac{\pi}{2} B\bigl(\frac{b}{N}\bigr) I(c,d) + \frac{\pi}{2} B\bigl(\frac{d}{N}\bigr) I(a,b)\\
& \quad +\Re(C_{b,-a}) \int_0^\infty \darg g_v - \Re(C_{d,-c}) \int_0^\infty \darg g_u + I
\end{split}
\end{equation}
with
\begin{equation}\label{proof eq 5}
\begin{split}
I & = \frac{\pi i}{2N} \sum_{m_1,m_2 \geq 1} \Biggl( (\zeta_N^{am_1}+\zeta_N^{-am_1}) (\zeta_N^{dm_2}-\zeta_N^{-dm_2}) \Biggl( \sum_{\substack{n_1 \geq 1 \\ n_1 \equiv b (N)}} + \sum_{\substack{n_1 \geq 1 \\ n_1 \equiv -b (N)}} \Biggr) \Biggl( \sum_{\substack{n_2 \geq 1 \\ n_2 \equiv c (N)}} - \sum_{\substack{n_2 \geq 1 \\ n_2 \equiv -c (N)}} \Biggr) \\
& \qquad - (\zeta_N^{cm_1}+\zeta_N^{-cm_1}) (\zeta_N^{bm_2}-\zeta_N^{-bm_2}) \Biggl( \sum_{\substack{n_1 \geq 1 \\ n_1 \equiv d (N)}} + \sum_{\substack{n_1 \geq 1 \\ n_1 \equiv -d (N)}} \Biggr) \Biggl( \sum_{\substack{n_2 \geq 1 \\ n_2 \equiv a (N)}} - \sum_{\substack{n_2 \geq 1 \\ n_2 \equiv -a (N)}} \Biggr)\Biggr) \cdot\\
& \qquad \cdot \frac{n_2}{m_1} \int_0^\infty \exp \left(-2\pi \left(\frac{m_1 n_1}{Nt}+\frac{m_2 n_2 t}{N}\right)\right) \dd t.
\end{split}
\end{equation}
Making the change of variables $t'=\frac{n_2}{m_1}t$, we have
\begin{equation}\label{proof eq 6}
\frac{n_2}{m_1} \int_0^\infty \exp \left(-2\pi \left(\frac{m_1 n_1}{Nt}+\frac{m_2 n_2 t}{N}\right)\right) \dd t = \int_0^\infty \exp \left(-2\pi \left(\frac{n_1 n_2}{Nt'}+\frac{m_1 m_2 t'}{N}\right)\right) \dd t'.
\end{equation}
Replacing in (\ref{proof eq 5}) and interchanging integral and summation, we get
\begin{equation}\label{proof eq 7}
\begin{split}
I = \frac{\pi i}{2N} \int_0^\infty & \sum_{m_1,m_2 \geq 1} (\zeta_N^{am_1}+\zeta_N^{-am_1}) (\zeta_N^{dm_2}-\zeta_N^{-dm_2}) e^{-\frac{2\pi m_1 m_2 t'}{N}} \cdot \\
& \qquad \cdot \Biggl( \sum_{\substack{n_1 \geq 1 \\ n_1 \equiv b (N)}} + \sum_{\substack{n_1 \geq 1 \\ n_1 \equiv -b (N)}} \Biggr) \Biggl( \sum_{\substack{n_2 \geq 1 \\ n_2 \equiv c (N)}} - \sum_{\substack{n_2 \geq 1 \\ n_2 \equiv -c (N)}} \Biggr) e^{-\frac{2\pi n_1 n_2}{Nt'}}\\
& - \sum_{m_1, m_2 \geq 1} (\zeta_N^{cm_1}+\zeta_N^{-cm_1}) (\zeta_N^{bm_2}-\zeta_N^{-bm_2}) e^{-\frac{2\pi m_1 m_2 t'}{N}} \cdot \\
& \qquad \cdot \Biggl( \sum_{\substack{n_1 \geq 1 \\ n_1 \equiv d (N)}} + \sum_{\substack{n_1 \geq 1 \\ n_1 \equiv -d (N)}} \Biggr) \Biggl( \sum_{\substack{n_2 \geq 1 \\ n_2 \equiv a (N)}} - \sum_{\substack{n_2 \geq 1 \\ n_2 \equiv -a (N)}} \Biggr)\Biggr) e^{-\frac{2\pi n_1 n_2}{Nt'}} \dd t'.
\end{split}
\end{equation}
Making the change of variables $y=t'/N$, we obtain
\begin{equation}\label{proof eq 8}
\begin{split}
I = \frac{\pi i}{2} \int_0^\infty & (f^*_{a,d}+f^*_{-a,d})(iy) \cdot (e^*_{b,c}+e^*_{-b,c}) \bigl(\frac{i}{N^2 y}\bigr)\\
& - (f^*_{c,b}+f^*_{-c,b})(iy) \cdot (e^*_{d,a}+e^*_{-d,a}) \bigl(\frac{i}{N^2 y}\bigr) \dd y.
\end{split}
\end{equation}
We compute this integral using Lemma \ref{lem fgh 2} with $k=\ell=1$, taking into account Lemma \ref{eab sigma}: for any $a,b,c,d \in \Z/N\Z$, we have
\begin{equation}\label{proof eq 9}
\int_0^\infty f^*_{a,b}(iy) e^*_{c,d}\bigl(\frac{i}{N^2 y}\bigr) \dd y = -\frac{i}{N^2} \bigl(\Lambda^*(f_{a,b} f_{c,d},2) - \beta_0(a,b) \Lambda(f_{c,d},2) \bigr) -\frac{\alpha_0(c,d)}{N} \Lambda^*(f_{a,b},1).
\end{equation}
Replacing in (\ref{proof eq 8}), we get $I=I_1+I_2+I_3$ with
\begin{align}
\label{eq I1} I_1 & = \frac{\pi}{2N^2} \Lambda^*((f_{a,d}+f_{-a,d})(f_{b,c}+f_{-b,c})-(f_{c,b}+f_{-c,b})(f_{d,a}+f_{-d,a}),2)\\
\label{eq I2} I_2 & = - \frac{\pi}{2N^2} \Bigl((\beta_0(a,d)+\beta_0(-a,d)) \Lambda(f_{b,c}+f_{-b,c},2) - (\beta_0(c,b)+\beta_0(-c,b)) \Lambda(f_{d,a}+f_{-d,a},2)\Bigr)\\
\label{eq I3} I_3 & = -\frac{\pi i}{2N} \Bigl((\alpha_0(b,c)+\alpha_0(-b,c)) \Lambda^*(f_{a,d}+f_{-a,d},1) - (\alpha_0(d,a)+\alpha_0(-d,a)) \Lambda^*(f_{c,b}+f_{-c,b},1)\Bigr)
\end{align}
Using the fact that $f_{a,b}=f_{b,a}=-f_{-a,-b}$, $I_1$ simplifies to
\begin{equation}\label{eq I1 2}
I_1 = \frac{\pi}{N^2} \Lambda^*(f_{a,d} f_{-b,c} - f_{a,-d} f_{b,c},2).
\end{equation}
The terms involving $\Lambda(f_{a,b},2)$ can be evaluated with (\ref{eq Lfab2}); they simplify with the terms involving $I(a,b)$ in (\ref{proof eq 4bis}):
\begin{equation}\label{eq I2 2}
I_2 = \frac{\pi}{2} B\bigl(\frac{b}{N}\bigr) I(c,d) - \frac{\pi}{2} B\bigl(\frac{d}{N}\bigr) I(a,b).
\end{equation}
The terms involving $\Lambda^*(f_{a,b},1)$ can be evaluated with (\ref{eq Lfab1}). Note that $\alpha_0(b,c)+\alpha_0(-b,c)$ is nonzero only in the case $b=0$ and $c \neq 0$. Since we assumed $u \neq 0$, this implies $a \neq 0$ and the case of Lemma \ref{lem Lfab12} involving Euler's constant does not happen. Anyway $I_3$ simplifies with the terms involving $\int_0^\infty \darg g_u$ in (\ref{proof eq 4bis}):
\begin{equation}\label{eq I3 2}
I_3 = -\Re(C_{b,-a}) \int_0^\infty \darg g_v + \Re(C_{d,-c}) \int_0^\infty \darg g_u.
\end{equation}
Putting everything together, we get
\begin{equation}
\int_0^\infty \eta(g_u,g_v) = I_1 = \frac{\pi}{N^2} \Lambda^* (f_{a,d} f_{-b,c}-f_{a,-d} f_{b,c},2).
\end{equation}
Theorem \ref{main thm} now follows from (\ref{eq lambda star}), taking into account the fact that $W_{N^2}(f_{a,b}f_{c,d}) = W_{N^2}(f_{a,b}) W_{N^2}(f_{c,d}) = -N^2 e_{a,b} e_{c,d}$.
\end{proof}

\section{Applications}\label{sec applications}

In this section we investigate the applications of Theorem \ref{main thm} to elliptic curves. Our strategy can be explained as follows. In \cite{brunault:mod_units}, we determined a list of elliptic curves defined over $\Q$ which can be parametrized by modular units. Let $E$ be such an elliptic curve, with modular parametrization $\varphi : X_1(N) \to E$. Let $x,y$ be functions on $E$ such that $u:=\varphi^*(x)$ and $v:=\varphi^*(y)$ are modular units. Assume that $\{x,y\} \in K_2(E) \otimes \Q$. Then the minimal polynomial $P$ of $(x,y)$ is tempered and in favorable cases, the Mahler measure of $P$ can be expressed in terms of a regulator integral $\int_\gamma \eta(x,y)$ where $\gamma$ is a (non necessarily closed) path on $E$. Using the techniques of \cite{brunault:mod_units}, we compute the images of the various cusps under $\varphi$ and deduce the divisors of $u$ and $v$. Since the divisors of Siegel units are easily computed using (\ref{def gab}) and (\ref{gab gamma}), we get an expression of $u$ and $v$ in terms of Siegel units, and may apply Theorem \ref{main thm}.

We will need the following expression for the regulator integral in terms of Bloch's elliptic dilogarithm. Let $E/\Q$ be an elliptic curve, and let $D_E : E(\C) \to \R$ be the elliptic dilogarithm associated to a chosen orientation of $E(\R)$. Extend $D_E$ by linearity to a function $\Z[E(\C)] \to \R$. Let $\gamma_E^+$ be the generator of $H_1(E(\C),\Z)^+$ corresponding to the chosen orientation.

\begin{pro}\label{pro int eta DE}
Let $x \in K_2(E) \otimes \Q$. Choose rational functions $f_i,g_i$ on $E$ such that $x = \sum_i \{f_i,g_i\}$, and define $\eta(x) = \sum_i \eta(f_i,g_i)$. Then for every $\gamma \in H_1(E(\C),\Z)$, we have
\begin{equation*}
\int_\gamma \eta(x) = -(\gamma_E^+ \bullet \gamma) D_E(\beta)
\end{equation*}
where $\bullet$ denotes the intersection product on $H_1(E(\C),\Z)$, and $\beta$ is the divisor given by
\begin{equation*}
\beta = \sum_i \sum_{p,q \in E(\C)} \ord_p(f_i) \ord_q(g_i) (p-q).
\end{equation*}
\end{pro}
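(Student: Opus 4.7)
The plan is to reduce the statement, by bilinearity of $\eta$, to the case of a single symbol $x=\{f,g\}$ with $f,g\in\C(E)^{\times}$, so that $\eta(x)=\eta(f,g)$ and $\beta=\sum_{p,q}\ord_p(f)\ord_q(g)(p-q)$. I would first check that $\eta(f,g)$ is a closed real $1$-form on $E(\C)\setminus S$, where $S$ is the union of supports of $(f)$ and $(g)$, and compute the monodromy around each $p\in S$: it equals $2\pi\log|\tau_p(f,g)|$, where $\tau_p(f,g)=(-1)^{mn}f^n/g^m\bmod\mathfrak{m}_p$ is the tame symbol ($m=\ord_p f$, $n=\ord_p g$). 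Because $x\in K_2(E)\otimes\Q$, all tame symbols are torsion, so their absolute values are $1$ and the monodromy vanishes. Hence $\eta(x)$ represents a class in $H^1(E(\C),\R)$ and integration gives a well-defined homomorphism $H_1(E(\C),\Z)\to\R$.

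The next step is to use complex conjugation $c$ on $E(\C)$, coming from the $\R$-structure. For real rational functions $h$ on $E$ one has $c^*\log|h|=\log|h|$ and $c^*d\arg h=-d\arg h$, so $c^*\eta(f,g)=-\eta(f,g)$. After reducing, by linearity in $\{f,g\}$, to the case where $f,g$ are defined over $\R$, we conclude that $[\eta(x)]$ lies in the $(-1)$-eigenspace $H^1(E(\C),\R)^{-}$, which is one-dimensional. Splitting $\gamma=\gamma^{+}+\gamma^{-}$ under the action of $c_*$, we get $\int_{\gamma^{+}}\eta(x)=0$, and $\gamma^{-}$ is characterized by $\gamma_E^{+}\bullet\gamma=\gamma_E^{+}\bullet\gamma^{-}$ via the (unimodular) intersection pairing between $H_1^{+}$ and $H_1^{-}$. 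This reduces the proposition to identifying $\int_{\gamma^{-}}\eta(x)$ with $\mp D_E(\beta)$ for a chosen generator $\gamma^{-}$, with the sign absorbed in the intersection number.

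Finally, for this identification I would pull back to the uniformization $E(\C)\cong\C^{\times}/q^{\Z}$ and expand $f,g$ in terms of Jacobi theta functions with prescribed zeros and poles. The form $\eta(f,g)$ then decomposes linearly as a sum $\sum_{p,q}\ord_p(f)\ord_q(g)\,\eta_{p,q}$, where $\eta_{p,q}$ is the contribution coming from a zero of $f$ at $p$ and a zero of $g$ at $q$; integrating $\eta_{p,q}$ along the negative cycle produces the Bloch--Wigner series defining $D_E(p-q)$. Summing over $p,q$ yields exactly $D_E(\beta)$, up to the sign accounted for by the choice of orientation of $\gamma_E^{+}$.

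The main obstacle is the last step, which amounts to Bloch's classical theorem relating the Beilinson regulator on $K_2$ of an elliptic curve to the elliptic dilogarithm; the cleanest option is to cite this directly (e.g.\ Bloch's Irvine lectures, or Rodriguez-Villegas' exposition) rather than to reproduce the theta-function calculation in this paper. Once that identification is in hand, combining it with the intersection-number bookkeeping from the second step gives the stated formula.
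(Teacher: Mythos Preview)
Your proposal is essentially correct and shares the same opening reduction as the paper: both use $c^*\eta(x)=-\eta(x)$ to kill the integral over $\gamma_E^+$ and reduce to a single generator of $H_1(E(\C),\Z)^-$. The difference lies in the final identification. You propose to compute the line integral $\int_{\gamma^-}\eta(f,g)$ directly via theta expansions on $\C^\times/q^\Z$ (or to cite Bloch's theorem), which works but is laborious if done in full. The paper instead passes to the surface integral $\int_{E(\C)}\eta(x)\wedge\dd z$: on the one hand this equals $D_E(\beta)$ by a cited result (\cite[Prop.~6]{brunault:LEF}), and on the other hand the symplectic basis identity
\[
\int_{E(\C)}\eta(x)\wedge\dd z=\int_{\gamma_E^+}\eta(x)\cdot\int_\delta\dd z-\int_{\gamma_E^+}\dd z\cdot\int_\delta\eta(x)=-\int_\delta\eta(x)
\]
(for $\delta$ with $\gamma_E^+\bullet\delta=1$) converts it back into the desired line integral, with the sign falling out automatically. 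This two-form trick is cleaner and avoids tracking the orientation conventions by hand.

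One small caution on your reduction ``by bilinearity to a single symbol $\{f,g\}$'': the individual terms $\int_\gamma\eta(f_i,g_i)$ need not be homotopy-invariant unless each $\{f_i,g_i\}$ already has torsion tame symbols, so strictly speaking you should keep the sum throughout rather than isolate one summand. This does not affect the argument, since every step is linear in the decomposition.
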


\begin{proof}
Since $x \in K_2(E) \otimes \Q$, the integral of $\eta(x)$ over a closed path $\gamma$ avoiding the zeros and poles of $f_i,g_i$ depends only on the class of $\gamma$ in $H_1(E(\C),\Z)$. Let $\delta$ be an element of $H_1(E(\C),\Z)$ such that $\gamma_E^+ \bullet \delta = 1$. Let $c$ denote the complex conjugation on $E(\C)$. Since $c^* \eta(x) = -\eta(x)$, we have $\int_{\gamma_E^+} \eta(x)=0$ and it suffices to prove the formula for $\gamma=\delta$. Choose an isomorphism $E(\C) \cong \C/(\Z+\tau\Z)$ which is compatible with complex conjugation. We have
\begin{equation*}
\overline{\int_{E(\C)} \eta(x) \wedge \dd z} = \int_{E(\C)} \eta(x) \wedge \dd \overline{z} = \int_{E(\C)} c^* ( -\eta(x) \wedge \dd z) = \int_{E(\C)} \eta(x) \wedge \dd z
\end{equation*}
so that $\int_{E(\C)} \eta(x) \wedge \dd z \in \R$. By \cite[Prop. 6]{brunault:LEF}, we get
\begin{equation*}
\int_{E(\C)} \eta(x) \wedge \dd z = D_E(\beta).
\end{equation*}
Since $(\gamma_E^+,\delta)$ is a symplectic basis of $H_1(E(\C),\Z)$, we have \cite[A.2.5]{bost}
\begin{equation*}
\int_{E(\C)} \eta(x) \wedge \dd z = \int_{\gamma_E^+} \eta(x) \cdot \int_\delta \dd z - \int_{\gamma_E^+} \dd z \cdot \int_\delta \eta(x) = -\int_\delta \eta(x).
\end{equation*}
\end{proof}

The following proposition is a slight generalization of a technique introduced by A. Mellit \cite{mellit} to prove identities involving elliptic dilogarithms. Let $E/\Q$ be an elliptic curve, which we view as a smooth cubic in $\mathbf{P}^2$.

\begin{definition}
For any lines $\ell$ and $m$ in $\mathbf{P}^2$, let $\beta_E(\ell,m)$ be the divisor of degree $9$ on $E(\C)$ defined by $\beta_E(\ell,m) = \sum_{x \in \ell \cap E} \sum_{y \in m \cap E} (x-y)$.
\end{definition}

\begin{pro}\label{pro incident}
Let $\ell_1,\ell_2,\ell_3$ be three incident lines in $\mathbf{P}^2$. Then
\begin{equation}\label{eq incident}
D_{E}(\beta_E(\ell_1,\ell_2))+D_{E}(\beta_E(\ell_2,\ell_3))+D_{E}(\beta_E(\ell_3,\ell_1))=0.
\end{equation}
\end{pro}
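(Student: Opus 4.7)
The plan is to find an auxiliary symbol $x$ whose associated divisor $\beta$ (in the sense of Proposition \ref{pro int eta DE}) encodes the desired combination, and whose regulator form $\eta(x)$ is exact. Rescale the three linear forms so that $\ell_1 + \ell_2 + \ell_3 = 0$ (possible because the lines are concurrent), and set $a = -\ell_1/\ell_3$, $b = -\ell_2/\ell_3$: these are rational functions on $E$ satisfying $a + b = 1$. Using bilinearity of $\eta$ together with $\eta(-1, \cdot) = \eta(\cdot, -1) = 0$, one obtains
\begin{equation*}
\eta(\ell_1/\ell_3, \ell_2/\ell_3) = \eta(a, b) = \eta(a, 1-a) = \dd D(a),
\end{equation*}
where $D(z) = \Im(\mathrm{Li}_2(z)) + \arg(1-z) \log|z|$ is the Bloch--Wigner dilogarithm, which extends continuously to $\mathbf{P}^1(\C)$ with $D(0) = D(1) = D(\infty) = 0$.

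Take $x = \{\ell_1/\ell_3, \ell_2/\ell_3\}$ and assume for simplicity that the three lines meet $E$ at nine distinct points (the degenerate case follows by continuity of $D_E$). Then the divisor attached to $x$ as in Proposition \ref{pro int eta DE} is
\begin{equation*}
\beta = \beta_E(\ell_1, \ell_2) - \beta_E(\ell_1, \ell_3) - \beta_E(\ell_3, \ell_2) + \beta_E(\ell_3, \ell_3).
\end{equation*}
Since $\eta(x) = \dd D(a)$ is exact and $D(a)$ is continuous on the compact surface $E(\C)$, Stokes' theorem gives $\int_{E(\C)} \eta(x) \wedge \dd z = \int_{E(\C)} \dd(D(a)\, \dd z) = 0$. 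By the computation carried out in the proof of Proposition \ref{pro int eta DE} (which rests on Prop.~6 of \cite{brunault:LEF}), the left-hand side is $D_E(\beta)$, so $D_E(\beta) = 0$. Using $D_E(-p) = -D_E(p)$, one has $D_E(\beta_E(m, \ell)) = -D_E(\beta_E(\ell, m))$ and $D_E(\beta_E(\ell_3, \ell_3)) = 0$ (the involution $(p,q) \mapsto (q,p)$ pairs each term with its negative), so $D_E(\beta) = 0$ rearranges into
\begin{equation*}
D_E(\beta_E(\ell_1, \ell_2)) + D_E(\beta_E(\ell_2, \ell_3)) + D_E(\beta_E(\ell_3, \ell_1)) = 0.
\end{equation*}

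The main subtlety is that $x = \{\ell_1/\ell_3, \ell_2/\ell_3\}$ may not lie in $K_2(E) \otimes \Q$: its tame symbol at a point $p \in \ell_1 \cap E$ disjoint from $\ell_2 \cup \ell_3$ equals $(\ell_3/\ell_2)(p)$, generically not a root of unity, so Proposition \ref{pro int eta DE} does not apply as a black box. The Stokes-based approach above bypasses this obstacle because it only uses the pointwise exactness of $\eta(x)$ supplied by the Bloch--Wigner identity, together with the identity $\int_{E(\C)} \eta(x) \wedge \dd z = D_E(\beta)$, which is a formal consequence of the definitions for any pair of rational functions $f, g$ on $E$.
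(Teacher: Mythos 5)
Your proof is correct and is in essence the same as the paper's: both arguments rest on the Steinberg relation attached to the concurrence of the lines (after rescaling, $\{-\ell_1/\ell_3,-\ell_2/\ell_3\}$ is a symbol $\{a,1-a\}$), combined with the fact that the real part of $\int_{E(\C)}\eta(f,g)\wedge \dd z$ equals $D_E$ of the convolution divisor, and oddness of $D_E$ to rearrange the four terms into the stated three. The paper phrases the first step as the vanishing of $\{f_1,f_2\}+\{f_2,f_3\}+\{f_3,f_1\}$ in $K_2(\C(E))\otimes\Q$ and applies the regulator to these Milnor symbols of the function field; you instead make the vanishing explicit via the Bloch--Wigner primitive $\dd D(a)=\eta(a,1-a)$ and Stokes, which is a clean way around the genuine obstacle you correctly identify (the symbol does not lie in $K_2(E)\otimes\Q$, so Proposition \ref{pro int eta DE} cannot be quoted as a black box). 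One caveat: the identity $\int_{E(\C)}\eta(f,g)\wedge\dd z=D_E(\beta)$ for arbitrary rational $f,g$ is not ``a formal consequence of the definitions'' --- it is the content of the Fourier-analytic computation in \cite{brunault:LEF}, and for functions not defined over $\R$ only the \emph{real part} of that integral is $D_E(\beta)$ in general (the imaginary part involves a different elliptic function). Since your integral is shown to vanish identically, taking real parts closes this gap and the conclusion stands.
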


\begin{proof}
Let $f_1,f_2,f_3$ be equations of $\ell_1,\ell_2,\ell_3$ such that $f_1+f_2=f_3$. Using the Steinberg relation $\{\frac{f_1}{f_3},\frac{f_2}{f_3}\}=0$, we deduce $\{f_1,f_2\}+\{f_2,f_3\}+\{f_3,f_1\}=0$ in $K_2(\C(E)) \otimes \Q$. Applying the regulator map and taking the real part \cite[Prop. 6]{brunault:LEF}, we deduce
\begin{equation*}
D_E(\beta(f_1,f_2))+D_E(\beta(f_2,f_3))+D_E(\beta(f_3,f_1))=0
\end{equation*}
where $\beta(f_i,f_{i+1})$ is defined as in Proposition \ref{pro int eta DE}. We have $\dv(f_i)=(\ell_i \cap E) - 3(0)$ so that
\begin{equation*}
\beta(f_i,f_{i+1})=\beta_E(\ell_i,\ell_{i+1})-3(\ell_i \cap E) - 3 \iota^* (\ell_{i+1} \cap E) + 9(0)
\end{equation*}
where $\iota$ denotes the map $p \mapsto -p$ on $E(\C)$. Since $D_E$ is odd, the proposition follows.
\end{proof}

\begin{remark}
If the incidence point of $\ell_1,\ell_2,\ell_3$ lies on $E$, then the relation (\ref{eq incident}) is trivial in the sense that it is a consequence of the fact that $D_E$ is odd.
\end{remark}

We will also need the following lemma to relate elliptic dilogarithms on isogenous curves.

\begin{lem}\label{lem DE DE'}
Let $\varphi : E \to E'$ be an isogeny between elliptic curves defined over $\Q$. Choose orientations of $E(\R)$ and $E'(\R)$ which are compatible under $\varphi$, and let $d_\varphi$ be the topological degree of the map $E(\R)^0 \to E'(\R)^0$, where $(\cdot)^0$ denotes the connected component of the origin. Then for any point $P' \in E'(\C)$, we have
\begin{equation}\label{eq DE DE'}
D_{E'}(P') = d_\varphi \cdot \sum_{\varphi(P)=P'} D_E(P).
\end{equation}
\end{lem}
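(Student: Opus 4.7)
The plan is to combine Bloch's Tate--series expression for $D_E$ with the distribution relation of the Bloch--Wigner dilogarithm $D$. Write $E(\C) = \C/\Lambda$ with $\Lambda = \omega_1\Z + \omega_2\Z$, $\omega_1 > 0$ the real period corresponding to $\gamma_E^+$ and $\mathrm{Im}(\omega_2) > 0$, so that $E(\C) \cong \C^\times/q^\Z$ via $z \mapsto w = e^{2\pi i z/\omega_1}$ with $q = e^{2\pi i \omega_2/\omega_1}$; in this model Bloch's formula reads
\[
D_E(P) = \sum_{n \in \Z} D(q^n w(P)),
\]
the series converging absolutely because $D(\zeta) = O(|\zeta|)$ as $\zeta \to 0$ and $D(1/\zeta) = -D(\zeta)$. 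Set up analogous data $(\omega_1', \omega_2', q')$ for $E'$, so that compatibility of orientations gives $\omega_1 = d_\varphi \omega_1'$.

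Realize $\varphi$ as coming from $\Lambda \subseteq \Lambda'$ and write $\omega_2 = a\omega_1' + b\omega_2'$ with $a \in \Z$, $b \in \Z_{>0}$. The intermediate lattice $\Lambda'' = \omega_1'\Z + \omega_2\Z$ and the curve $E'' = \C/\Lambda''$ give a factorization $\varphi = \varphi_2 \circ \varphi_1 : E \to E'' \to E'$ in which $\varphi_1$ contracts only the real period ($d_{\varphi_1} = d_\varphi$, $\deg\varphi_1 = d_\varphi$) while $\varphi_2$ contracts only the imaginary period ($d_{\varphi_2} = 1$, $\deg\varphi_2 = b$). For $\varphi_1$, one has $q_{E''} = q_E^{d_\varphi}$ and $w_{E''} = w_E^{d_\varphi}$, and the preimages of $P''$ with Tate parameter $\eta = w_0^{d_\varphi}$ are $\{w_0 \zeta : \zeta \in \mu_{d_\varphi}\}$; the distribution relation $D(u^n) = n \sum_{\zeta \in \mu_n} D(u\zeta)$ (immediate from the analogous identity for $\mathrm{Li}_2$) then yields
\[
D_{E''}(P'') = \sum_{m \in \Z} D\bigl((q_E^m w_0)^{d_\varphi}\bigr) = d_\varphi \sum_{\zeta \in \mu_{d_\varphi}} \sum_{m \in \Z} D(q_E^m w_0 \zeta) = d_\varphi \sum_{\varphi_1(P) = P''} D_E(P).
\]
For $\varphi_2$ the uniformizer is preserved and $q_{E''} = (q')^b$; the $b$ preimages of $P'$ with parameter $\eta$ are $\{(q')^k \eta : 0 \le k < b\}$, and the substitution $m = nb + k$ rewrites $\sum_{k} \sum_{n} D((q')^{nb+k}\eta)$ as $\sum_{m \in \Z} D((q')^m \eta) = D_{E'}(P')$. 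Composing the two cases proves the lemma.

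The main technical subtlety is matching the definition of $D_E$ used in Proposition \ref{pro int eta DE} (characterized by $D_E(\beta) = \int_{E(\C)} \eta(x) \wedge \dd z$ with $\int_{\gamma_E^+} \dd z = 1$) with the Tate series expression above, including the sign convention dictated by the chosen orientation of $E(\R)$. A self-contained alternative would be to propagate this integral characterization through $\varphi$ directly, using $\varphi^*\dd z' = d_\varphi \, \dd z$ and the degree formula $\int_{E(\C)} \varphi^*\omega = (\deg\varphi) \int_{E'(\C)} \omega$; this quickly yields the identity $D_{E'}(\beta(x')) = d_\varphi \cdot (\varphi_*D_E)(\beta(x'))$ for every $x' \in K_2(E')\otimes \Q$, but promoting it to the pointwise identity claimed in the lemma requires an additional density argument on the image of $K_2(E')\otimes\Q$ among odd functions on $E'(\C)$.
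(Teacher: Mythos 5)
Your proof is correct and follows essentially the same route as the paper: pass to the Tate uniformizations $\C^\times/q^\Z$, write $D_E$ as the sum $\sum_n D(q^n w)$, and apply the distribution relation $D(x^r)=r\sum_{u^r=1}D(ux)$ of the Bloch--Wigner function. Your factorization through the intermediate lattice $\omega_1'\Z+\omega_2\Z$ merely makes explicit the bookkeeping (namely $q^{d_\varphi}=(q')^b$ and the regrouping of fibers) that the paper's one-line invocation of $x\mapsto x^{d_\varphi}$ leaves implicit, and the convention-matching issue you raise at the end is moot since the paper takes the Tate series as the definition of $D_E$.
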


\begin{proof}
Choose isomorphisms $E(\C) \cong \C/(\Z+\tau\Z)$ and $E'(\C) \cong \C/(\Z+\tau'\Z)$ which are compatible with complex conjugation. Then $E(\R)=\R/\Z$ and $E'(\R) = \R/\Z$ so that $\varphi$ is given by $[z] \mapsto [d_\varphi z]$. We have isomorphisms $E(\C) \cong \C^\times /q^\Z$ and $E'(\C) \cong \C^\times / (q')^\Z$ with $q=e^{2\pi i \tau}$ and $q'=e^{2\pi i \tau'}$. Let $\pi : \C^\times \to E(\C)$ and $\pi' : \C^\times \to E'(\C)$ be the canonical maps. Let $P'$ be a point of $E'(\C)$. By definition $D_{E'}(P') = \sum_{\pi'(x')=P'} D(x')$ where $D$ is the Bloch-Wigner function, and similarly $D_E(P) = \sum_{\pi(x)=P} D(x)$. Now $\varphi$ is induced by the map $x \mapsto x^{d_\varphi}$, so that (\ref{eq DE DE'}) follows from the usual functional equation $D(x^r) = r \sum_{u^r=1} D(ux)$ for any $r \geq 1$ \cite[(21)]{oesterle}.
\end{proof}

Note that in the particular case $\varphi$ is the multiplication-by-$n$ map on $E$, Lemma \ref{lem DE DE'} gives the usual functional equation
\begin{equation*}
D_E(nP) = n \sum_{Q \in E[n]} D_E(P+Q).
\end{equation*}

\subsection{Conductors 14, 35 and 54}

We prove the following cases of Boyd's conjectures \cite[Table 5, $k=-1,-2,-3$]{boyd:expmath}. Note that the case of conductor 14 was proved by A. Mellit \cite{mellit}.

\begin{thm}
Let $P_k$ be the polynomial $P_k(x,y)=y^2+kxy+y-x^3$, and let $E_k$ be the elliptic curve defined by the equation $P_k(x,y)=0$. We have the identities
\begin{align}
\label{mP-1} m(P_{-1}) & = 2 L'(E_{-1},0)\\
\label{mP-2} m(P_{-2}) & = L'(E_{-2},0)\\
\label{mP-3} m(P_{-3}) & =L'(E_{-3},0).
\end{align}
\end{thm}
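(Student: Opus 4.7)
The plan is to treat each of the three curves $E_{-1}$, $E_{-2}$, $E_{-3}$ (of conductors $14$, $35$, $54$) in parallel, following the recipe laid out at the start of Section~\ref{sec applications}.

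\emph{Step 1: Parametrization and Siegel expression.} For each $k$, fix the modular parametrization $\varphi_k \colon X_1(N_k) \to E_k$ furnished by \cite{brunault:mod_units}, so that $u_k := \varphi_k^* x$ and $v_k := \varphi_k^* y$ are modular units. By computing the divisors of $x$ and $y$ on $E_k$ (supported at the origin together with the points lying over $x=0$ and $y=0$) and tracking the cuspidal images of $\varphi_k$, one writes explicitly
\begin{equation*}
u_k = \mathrm{const} \cdot \prod_i g_{a_i, b_i}^{n_i}, \qquad v_k = \mathrm{const} \cdot \prod_j g_{c_j, d_j}^{m_j}
\end{equation*}
with $(a_i,b_i),(c_j,d_j)\in(\Z/N_k\Z)^2$. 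Existence is guaranteed by the fact that the Siegel units generate $\mathcal{O}(Y(N_k))^\times \otimes \Q$ combined with Manin--Drinfeld, and it is essentially a finite calculation once the images of cusps under $\varphi_k$ are known.

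\emph{Step 2: From $m(P_k)$ to a regulator on $X_1(N_k)$.} Temperedness of $P_k$ (read off its Newton polygon) implies $\{x,y\}\in K_2(E_k)\otimes \Q$, and the Rodriguez-Villegas/Deninger argument realises $m(P_k)$ as $\tfrac{1}{2\pi}\int_{\gamma_k}\eta(x,y)$ for an explicit real $1$-chain $\gamma_k$ on $E_k(\C)$ obtained from $\{|x|=1\}\cap\{|y|\leq 1\}$. Pulling $\gamma_k$ back under $\varphi_k$ yields a class in $H_1(X_1(N_k),\{\mathrm{cusps}\};\Z)$; writing it as a $\Q$-combination of modular symbols $\xi(\gamma)=\{\gamma\cdot 0,\gamma\cdot\infty\}$, substituting the Siegel expressions from Step~1, and invoking Theorem~\ref{main thm} termwise (using Lemma~\ref{lem int darg} to absorb the $\darg$ boundary contributions) rewrites $m(P_k)$ as a finite $\Q$-linear combination of regularised values $\pi\Lambda^*(F_i,0)$, where each $F_i$ is a sum $e_{a,d}e_{b,-c}+e_{a,-d}e_{b,c}$ of weight~$2$ Eisenstein products on $\Gamma_1(N_k^2)$.

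\emph{Step 3: Identification with $L'(E_k,0)$.} The remaining task is to show that the resulting combination $\sum c_i\Lambda^*(F_i,0)$ equals $2L'(E_k,0)$, $L'(E_k,0)$, $L'(E_k,0)$ respectively for $k=-1,-2,-3$. Two complementary routes are available. First, one may decompose each $F_i$ on the basis of new, old, and Eisenstein forms at level $N_k^2$ (by matching enough $q$-coefficients, which is feasible for the small conductors at hand), then use the functional equation \eqref{eq lambda star} to convert $\Lambda^*(F_i,0)$ into $\Lambda^*(W_{N_k^2}F_i,2)$ and extract the projection onto the newform $f_{E_k}$; modularity then yields a rational multiple of $\Lambda(f_{E_k},2)=\tfrac{N_k}{4\pi^2}\Gamma(2)L(E_k,2)$, which is proportional to $L'(E_k,0)$ via the functional equation for $L(E_k,s)$. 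Second, and this is Mellit's approach used in \cite{mellit} for the $k=-1$ case, one translates the regulator integral on $E_k$ into a value $D_{E_k}(\beta_k)$ of Bloch's elliptic dilogarithm via Proposition~\ref{pro int eta DE}, then uses Proposition~\ref{pro incident} on triples of incident lines (and Lemma~\ref{lem DE DE'} on isogenous curves if necessary) to reduce $D_{E_k}(\beta_k)$ to the single combination of torsion dilogarithm values that is known, by Beilinson, to match $L'(E_k,0)$.

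\emph{Main obstacle.} The delicate point is Step~3: matching normalisations (between $m(P_k)$ and the regulator, and between $L'(E_k,0)$ and $L(E_k,2)$), and verifying that the parasitic Eisenstein contributions cancel exactly. For $k=-1$ this was done by Mellit with a short chain of incident lines, but for $k=-2$ and especially $k=-3$ the support of $\beta_k$ is larger (several cusps of $X_1(N_k)$ contribute), so one must either carry out a more elaborate elliptic-dilogarithm manipulation or perform an honest new/old/Eisenstein decomposition at level $N_k^2$. Everything else in the argument is formal once the explicit modular parametrization $\varphi_k$ and its cuspidal data are in hand.
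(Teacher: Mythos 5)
Your main line of argument (Siegel-unit expressions for $\varphi^*x,\varphi^*y$, the Deninger path written as modular symbols, Theorem~\ref{main thm} applied termwise, then identification of the resulting weight-$2$ form) is exactly the paper's proof. Two caveats on your Step~3. First, the ``main obstacle'' you flag does not materialise for these three conductors: the paper's computation shows that the combination of Eisenstein products produced by Theorem~\ref{main thm} is, on the nose, $4f_{-1}$, $2f_{-2}$ and $2f_{-3}$ respectively (a rational multiple of the newform, with no parasitic old or Eisenstein contribution to cancel), so $\int_{\gamma_k}\eta(x,y)=\pi L'(c\,f_{-k},0)$ directly and $m(P_k)=\tfrac{c}{2}L'(E_k,0)$; the new/old/Eisenstein bookkeeping you describe is only needed in the conductor-$21$ and $48$ cases, where the symbol or the form genuinely fails to be a newform multiple. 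Second, your alternative route via Proposition~\ref{pro int eta DE}, incident lines and ``the combination of torsion dilogarithm values that is known, by Beilinson, to match $L'(E_k,0)$'' is circular as stated: in the paper the identities $L(E_k,2)=c\,D_{E_k}(A)$ (Zagier's conjecture for these curves) are \emph{deduced from} the Boyd identities via $m(P_k)=\tfrac{9}{\pi}D_{E_k}(A)$, not available as an input. So route (a) is the correct and intended one; route (b) should be dropped or inverted.
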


By the discussion in \cite[p. 62]{boyd:expmath}, the polynomial $P_k$ does not vanish on the torus for $k \in \R$, $k <-1$. For these values of $k$ we thus have
\begin{equation*}
m(P_k) = \frac{1}{2\pi} \int_{\gamma_k} \eta(x,y)
\end{equation*}
where $\gamma_k$ is the closed path on $E_k(\C)$ defined by
\begin{equation*}
\gamma_k = \{(x,y) \in E_k(\C) : |x|=1, |y| \leq 1\}.
\end{equation*}
The point $A=(0,0)$ on $E_k$ has order $3$ and the divisors of $x$ and $y$ are given by
\begin{equation*}
\dv(x) = (A)+(-A)-2(0) \qquad \dv(y) = 3(A)-3(0).
\end{equation*}
The tame symbols of $\{x,y\}$ at $0$, $A$, $-A$ are respectively equal to $1,-1,-1$, so that $\{x,y\}$ defines an element of $K_2(E_k) \otimes \Q$. Moreover $\gamma_k$ is a generator of $H_1(E_k(\C),\Z)^-$ which satisfies $\gamma_{E_k}^+ \bullet \gamma_k = -2$, so that Proposition \ref{pro int eta DE} gives
\begin{equation}\label{mPk DEkA}
m(P_k) = \frac{1}{\pi} D_{E_k}(\beta(x,y)) = \frac{9}{\pi} D_{E_k}(A) \qquad (k<-1).
\end{equation}
Note that by continuity (\ref{mPk DEkA}) also holds for $k=-1$.

Now assume $k \in \{-1,-2,-3\}$. The elliptic curves $E_{-1}$, $E_{-2}$, $E_{-3}$ are respectively isomorphic to $14a4$, $35a3$ and $54a3$. By \cite{brunault:mod_units}, these curves are parametrized by modular units. Since the functions $x$ and $y$ are supported in the rational torsion subgroup, their pull-back $u=\varphi^* x$ and $v=\varphi^* y$ are modular units, and we may express them in terms of Siegel units. For brevity, we put $g_b = g_{0,b}$ in what follows. We also let $f_{-k}$ be the newform associated to $E_{-k}$.

In the case $k=-1$, $N=14$, we find explicitly
\begin{equation*}
u  = \frac{g_{5} g_{6}}{g_{1} g_{2}} \qquad v = -\frac{g_{3} g_{5} g_{6}^2}{g_{1}^2 g_{2} g_{4}}.
\end{equation*}
Moreover the Deninger path is the following sum of modular symbols
\begin{equation*}
\gamma_{-1} = \varphi_* \left\{\frac27,-\frac27\right\} = \varphi_* \left( -\xi \begin{pmatrix} 2 & 1 \\ 7 & 4 \end{pmatrix}-\xi \begin{pmatrix} 1 & 0 \\ 4 & 1 \end{pmatrix}+\xi \begin{pmatrix} 1 & 0 \\ -4 & 1 \end{pmatrix} + \xi \begin{pmatrix} -2 & 1 \\ 7 & -4 \end{pmatrix} \right).
\end{equation*}
Using Theorem \ref{main thm}, we obtain
\begin{equation*}
\int_{\gamma_{-1}} \eta(x,y) = \int_{2/7}^{-2/7} \eta(u,v) = \pi L'( 4f_{-1},0).
\end{equation*}
This proves (\ref{mP-1}).

In the case $k=-2$, $N=35$, we find explicitly
\begin{equation*}
u = \frac{g_{2} g_{9} g_{12} g_{15} g_{16}}{g_{3} g_{4} g_{10} g_{11} g_{17}} \qquad v = - \frac{g_{2}^2 g_{5} g_{9}^2 g_{12}^2 g_{15} g_{16}^2}{g_{1} g_{3} g_{4} g_{6} g_{8} g_{10}^2 g_{11} g_{13} g_{17}}.
\end{equation*}
Moreover the Deninger path is the following sum of modular symbols
\begin{equation*}
\gamma_{-2} = \varphi_* \left\{\frac15,-\frac15\right\} = \varphi_* \left( \xi \begin{pmatrix} 1 & 0 \\ -5 & 1 \end{pmatrix}-\xi \begin{pmatrix} 1 & 0 \\ 5 & 1 \end{pmatrix} \right).
\end{equation*}
Using Theorem \ref{main thm}, we obtain
\begin{equation*}
\int_{\gamma_{-2}} \eta(x,y) = \int_{1/5}^{-1/5} \eta(u,v) = \pi L'( 2f_{-2},0).
\end{equation*}
This proves (\ref{mP-2}).

In the case $k=-3$, $N=54$, we find explicitly
\begin{equation*}
x = \frac{g_2 g_4 g_5^2 g_{13}^2 g_{14} g_{16} g_{20} g_{21} g_{22} g_{23}^2 g_{24}}{g_1 g_7 g_8^2 g_{10}^2 g_{11} g_{12} g_{15} g_{17} g_{19} g_{25} g_{26}^2} \qquad y = - \frac{g_2^3 g_3 g_5^3 g_{13}^3 g_{16}^3 g_{20}^3 g_{21} g_{23}^3 g_{24}^2}{g_1^3 g_6 g_8^3 g_{10}^3 g_{12} g_{15}^2 g_{17}^3 g_{19}^3 g_{26}^3}.
\end{equation*}
Moreover the Deninger path is the following sum of modular symbols
\begin{equation*}
\gamma_{-3} = \varphi_* \left\{-\frac18,\frac18\right\} = \varphi_* \left( \xi \begin{pmatrix} 1 & 0 \\ 8 & 1 \end{pmatrix}-\xi \begin{pmatrix} 1 & 0 \\ -8 & 1 \end{pmatrix} \right).
\end{equation*}
Using Theorem \ref{main thm}, we obtain
\begin{equation*}
\int_{\gamma_{-3}} \eta(x,y) = \int_{-1/8}^{1/8} \eta(u,v) = \pi L'( 2f_{-3},0).
\end{equation*}
This proves (\ref{mP-2}).


Using (\ref{mPk DEkA}), we also deduce Zagier's conjectures for these elliptic curves.

\begin{thm}
We have the identities
\begin{equation}
L(E_{-1},2) = \frac{9\pi}{7} D_{E_{-1}}(A) \qquad L(E_{-2},2) = \frac{36\pi}{35} D_{E_{-2}}(A) \qquad L(E_{-3},2) = \frac{2\pi}{3} D_{E_{-3}}(A).
\end{equation}
\end{thm}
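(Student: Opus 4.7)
The plan is to combine formula (\ref{mPk DEkA}), which expresses the Mahler measure of $P_k$ in terms of the elliptic dilogarithm value $D_{E_k}(A)$, with the Mahler measure formulas (\ref{mP-1})--(\ref{mP-3}) from the previous theorem, and then convert $L'(E_k,0)$ into $L(E_k,2)$ via the functional equation of the associated newform.

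First I would recall that $E_{-1}$, $E_{-2}$, $E_{-3}$ are isomorphic to the optimal curves $14a4$, $35a3$, $54a3$ of conductors $N=14,35,54$ respectively, and that the attached newforms $f_{-k}\in S_2(\Gamma_0(N))$ all have sign $+1$ in the functional equation (each of these curves has analytic rank $0$). With the normalization $\Lambda(f,s) = N^{s/2}(2\pi)^{-s}\Gamma(s)L(f,s)$ used in \S\ref{sec Lstar} and the functional equation $\Lambda(f,s) = \Lambda(W_N f,2-s)$ specialized to $W_N f_{-k} = -f_{-k}$ for a weight-$2$ newform, a short manipulation near $s=0$ (using $L(f_{-k},0)=0$ and $\Gamma(s)\sim 1/s$) gives the clean identity
\begin{equation*}
L'(E_k,0) = \frac{N}{4\pi^2} L(E_k,2) \qquad (k=-1,-2,-3).
\end{equation*}

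Next I would plug this into the previous theorem. Combining $m(P_k) = c_k L'(E_k,0)$ with (\ref{mPk DEkA}) yields
\begin{equation*}
\frac{9}{\pi} D_{E_k}(A) = m(P_k) = c_k \cdot \frac{N}{4\pi^2} L(E_k,2),
\end{equation*}
so that $L(E_k,2) = \frac{36\pi}{c_k N} D_{E_k}(A)$. Substituting $(c_{-1},N)=(2,14)$, $(c_{-2},N)=(1,35)$, $(c_{-3},N)=(1,54)$ gives the three numerical coefficients $\frac{9\pi}{7}$, $\frac{36\pi}{35}$, $\frac{2\pi}{3}$ claimed in the statement.

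The only step requiring any care is the sign of the functional equation for each of the three newforms (equivalently, the Atkin-Lehner eigenvalue), since the factor of $-1$ would flip the relation between $L'(E_k,0)$ and $L(E_k,2)$; I expect this to be the main (though still very minor) obstacle, and it is settled directly from the known ranks of $14a$, $35a$, $54a$, all of which are zero, forcing $\epsilon=+1$. The rest is purely arithmetic bookkeeping.
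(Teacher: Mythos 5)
Your proposal is correct and is exactly the paper's (essentially one-line) argument: combine $m(P_k)=\frac{9}{\pi}D_{E_k}(A)$ from (\ref{mPk DEkA}) with the Mahler measure identities (\ref{mP-1})--(\ref{mP-3}) and the rank-zero functional equation $L'(E_k,0)=\frac{N}{4\pi^2}L(E_k,2)$, and the three coefficients check out. The only blemish is the intermediate claim $W_Nf_{-k}=-f_{-k}$, which with the paper's convention $\Lambda(f,s)=\Lambda(W_Nf,2-s)$ would force $\Lambda(f_{-k},1)=0$ and flip the sign; as you note yourself, analytic rank zero forces the $+$ sign, i.e.\ $W_Nf_{-k}=+f_{-k}$ in this convention, which is what your final identity actually uses.
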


\subsection{Conductor 21}

The modular curve $X_0(21)$ has genus $1$ and is isomorphic to the elliptic curve $E_0 = 21a1$ with minimal equation $y^2+xy = x^3-4x-1$. The Mordell-Weil group $E_0(\Q)$ is isomorphic to $\Z/4\Z \times \Z/2\Z$ and is generated by the points $P=(5,8)$ and $Q=(-2,1)$, with respective orders 4 and 2. The modular curve $X_0(21)$ has 4 cusps: $0$, $1/3$, $1/7$, $\infty$ and we may choose the isomorphism $\varphi_0 : X_0(21) \xrightarrow{\cong} E_0$ so that $\varphi_0(0)=0$, $\varphi_0(1/3)=(-1,-1)=P+Q$, $\varphi_0(1/7)=Q$ and $\varphi_0(\infty)=P$. Let $f_P$ and $f_Q$ be functions on $E$ with divisors
\begin{equation*}
(f_P) = 4(P)-4(0) \qquad (f_Q) = 2(Q)-2(0).
\end{equation*}
These modular units can be expressed in terms of the Dedekind $\eta$ function \cite[\S 3.2]{ligozat}:
\begin{equation*}
f_P \sim_{\Q^\times} \frac{\eta(3\tau) \eta(21\tau)^5}{\eta(\tau)^5 \eta(7\tau)} \qquad f_Q \sim_{\Q^\times} \frac{\eta(3\tau) \eta(7\tau)^3}{\eta(\tau)^3 \eta(21\tau)}.
\end{equation*}
They can in turn be expressed in terms of Siegel units using the formula
\begin{equation*}
\frac{\eta(d\tau)}{\eta(\tau)} = C_d \prod_{k=1}^{(d-1)/2} g_{0,kN/d}(\tau) \qquad (C_d \in \C^\times).
\end{equation*}
Thus we can take
\begin{equation*}
f_P = \frac{ g_{0,7} (\prod_{b=1}^{10} g_{0,b})^5}{g_{0,3} g_{0,6} g_{0,9}} \qquad f_Q = \frac{ g_{0,7} (g_{0,3} g_{0,6} g_{0,9})^3}{ \prod_{b=1}^{10} g_{0,b}}.
\end{equation*}
The homology group $H_1(E_0(\C),\Z)^-$ is generated by the modular symbol $\gamma = \{-\frac13,\frac13\} = \xi\begin{pmatrix} 1 & 0 \\ 3 & 1 \end{pmatrix} - \xi\begin{pmatrix} 1 & 0 \\ -3 & 1 \end{pmatrix}$. Using Theorem \ref{main thm} and a computer algebra system, we find
\begin{equation*}
\int_{\gamma} \eta(f_P,f_Q) = \pi \Lambda^*(F,0)
\end{equation*}
where $F$ is the modular form of weight 2 and level 21 given by
\begin{equation*}
F = 68q + 220q^2 + 68q^3 + 508q^4 + 440q^5 + 220q^6 + 508q^7 + 1068q^8 + 68q^9 + \cdots
\end{equation*}
The space $M_2(\Gamma_0(21))$ has dimension 4 and is generated by $f_0$, $E_{2,3}$, $E_{2,7}$ and $E_{2,21}$ where $f_0$ is the newform associated to $E_0$ and $E_{2,d}(\tau) = E_2(\tau)-dE_2(d\tau)$. We find explicitly
\begin{equation*}
F = -4f_0 + 72 E_{2,3} + \frac{72}{7} E_{2,7} - \frac{72}{7} E_{2,21}
\end{equation*}
We have $L(E_{2,d},s) = (1-d^{1-s}) \zeta(s) \zeta(s-1)$ and a little computation gives
\begin{equation*}
L(F,s) = -4L(E_0,s) + \frac{72}{7 \cdot 21^s} (7 \cdot 21^s-21\cdot 7^s - 7 \cdot 3^s + 21) \zeta(s) \zeta(s-1).
\end{equation*}
Thus $L(F,0)=0$ and using $\zeta(0)=-1/2$ and $\zeta(-1)=-1/12$, we find
\begin{equation*}
\Lambda^*(F,0) = \Lambda(F,0) = L'(F,0) = -4L'(E_0,0)-6 \log 7.
\end{equation*}
The extraneous term $6 \log 7$ stems from the fact that the Milnor symbol $\{f_P,f_Q\}$ does not extend to $K_2(E_0) \otimes \Q$. Indeed, the tame symbols are given by
\begin{equation*}
\partial_0 \{f_P,f_Q\} = 1 \qquad \partial_P \{f_P,f_Q\} = f_Q(P)^{-4} = \zeta_7^{-4} 7^{-4} \qquad \partial_Q \{f_P,f_Q\} = \zeta_7^{-4} 7^4.
\end{equation*}
Since $f_P$ and $f_Q$ are supported in torsion points, there is a standard trick (due to Bloch) to alter the symbol $\{f_P,f_Q\}$ to make an element of $K_2(E_0) \otimes \Q$. We will see that the corresponding regulator integral is proportional to $L'(E_0,0)$ alone. We put $x:=\{f_P,f_Q\}+\{7,f_P/f_Q^2\}$, which belongs to $K_2(E_0) \otimes \Q$, and we define
\begin{equation*}
\eta(x):=\eta(f_P,f_Q)+\eta(7,f_P/f_Q^2) = \eta(f_P,f_Q)+\log 7 \cdot \darg(f_P/f_Q^2).
\end{equation*}
We can compute the integral of $\darg(f_P/f_Q^2)$ using Lemma \ref{lem int darg}, which results in
\begin{equation*}
\int_\gamma \eta(x) = -4\pi L'(E_0,0).
\end{equation*}
On the other hand, we have $\int_\gamma \omega_{f_0} \sim 1.91099i$ which shows that $\gamma_{E_0}^+ \bullet \gamma >0$. Since $E_0(\R)$ has two connected components, this implies $\gamma_{E_0}^+ \bullet \gamma=1$ and Proposition \ref{pro int eta DE} gives
\begin{equation*}
\int_\gamma \eta(x) = -D_{E_0}(\beta).
\end{equation*}
We have $\beta = 8(P+Q)-8(P)-8(Q)+8(0)$. Since $D_{E_0}$ is odd, this gives
\begin{equation*}
\int_\gamma \eta(x) = -8 \bigl(D_{E_0}(P+Q)-D_{E_0}(P)\bigr).
\end{equation*}
Taking into account the functional equation $L'(E_0,0) = \frac{21}{4\pi^2} L(E_0,2)$, we have thus shown Zagier's conjecture for $E_0$.

\begin{thm}\label{zagier 21}
We have the identity $L(E_0,2) = \frac{8\pi}{21} \bigl(D_{E_0}(P+Q)-D_{E_0}(P)\bigr)$.
\end{thm}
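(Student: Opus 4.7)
The proof is essentially a bookkeeping step: every ingredient needed for Theorem \ref{zagier 21} has already been assembled in the preceding discussion, so the plan is just to collect them, equate two computations of the same regulator integral, and apply the functional equation.

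First, I would isolate the two expressions for the regulator integral $\int_\gamma \eta(x)$, where $x = \{f_P,f_Q\} + \{7, f_P/f_Q^2\} \in K_2(E_0) \otimes \Q$ and $\gamma$ is the modular symbol generator of $H_1(E_0(\C),\Z)^-$ computed above. On the one hand, Theorem \ref{main thm} applied to the Siegel-unit expressions for $f_P$ and $f_Q$, combined with the evaluation of $\int_0^\infty \darg(f_P/f_Q^2)$ via Lemma \ref{lem int darg}, gives the "analytic" expression
\begin{equation*}
\int_\gamma \eta(x) = -4\pi L'(E_0,0).
\end{equation*}
On the other hand, since $x \in K_2(E_0)\otimes \Q$ and $\gamma_{E_0}^+ \bullet \gamma = 1$, Proposition \ref{pro int eta DE} gives the "geometric" expression
\begin{equation*}
\int_\gamma \eta(x) = -D_{E_0}(\beta), \qquad \beta = 8\bigl((P+Q)-(P)-(Q)+(0)\bigr),
\end{equation*}
where $\beta$ is computed from the divisors $(f_P) = 4(P)-4(0)$ and $(f_Q) = 2(Q)-2(0)$ and the fact that $\{7, f_P/f_Q^2\}$ contributes nothing to $\beta$ because the divisor of the constant function $7$ is zero.

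Next I would simplify the right-hand side using that $D_{E_0}$ is odd and that $Q$ is a $2$-torsion point, so $D_{E_0}(Q)=0$ and $D_{E_0}(P+Q) + D_{E_0}(-P-Q) = 0$; this yields $D_{E_0}(\beta) = 8(D_{E_0}(P+Q) - D_{E_0}(P))$. Equating the two expressions gives
\begin{equation*}
L'(E_0,0) = \frac{2}{\pi}\bigl(D_{E_0}(P+Q) - D_{E_0}(P)\bigr).
\end{equation*}

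Finally, I would apply the functional equation $L'(E_0,0) = \frac{21}{4\pi^2} L(E_0,2)$ (coming from $\Lambda(E_0,s) = \Lambda(E_0,2-s)$ at $s=0$ for the newform of level $21$) to convert the identity into the desired statement $L(E_0,2) = \frac{8\pi}{21}(D_{E_0}(P+Q) - D_{E_0}(P))$. The only nontrivial step is verifying $\gamma_{E_0}^+ \bullet \gamma = 1$, which is already settled in the text via the numerical computation $\int_\gamma \omega_{f_0} \sim 1.91099 i$ together with the observation that $E_0(\R)$ has two connected components, so there is no genuine obstacle remaining.
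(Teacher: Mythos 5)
Your proposal is correct and follows essentially the same route as the paper: it equates the modular-symbol evaluation $\int_\gamma \eta(x) = -4\pi L'(E_0,0)$ with the elliptic-dilogarithm evaluation $-D_{E_0}(\beta) = -8\bigl(D_{E_0}(P+Q)-D_{E_0}(P)\bigr)$ from Proposition \ref{pro int eta DE}, and then applies the functional equation $L'(E_0,0)=\frac{21}{4\pi^2}L(E_0,2)$. All the steps you cite (the modified symbol $x$, the intersection number $\gamma_{E_0}^+\bullet\gamma=1$, the computation of $\beta$, and the use of oddness of $D_{E_0}$) are exactly those of the text.
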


We will now deduce Boyd's conjecture \cite[Table 1, $k=3$]{boyd:expmath} for the elliptic curve $E_1$ of conductor $21$ given by the equation $P(x,y)=x+\frac{1}{x}+y+\frac{1}{y}+3=0$.

\begin{thm}\label{boyd 21}
We have the identity $m(x+\frac{1}{x}+y+\frac{1}{y}+3)=2L'(E_1,0)$.
\end{thm}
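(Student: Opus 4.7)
The idea is to reduce the identity to Zagier's conjecture for $E_0 = 21a1$ (Theorem~\ref{zagier 21}), exploiting the fact that $E_1$, having conductor~$21$, is $\Q$-isogenous to $E_0$, so that $L'(E_1, 0) = L'(E_0, 0)$.

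First one checks that $P(e^{i\alpha}, e^{i\beta}) = 2\cos\alpha + 2\cos\beta + 3$ vanishes on a proper subset of the $2$-torus, so that Deninger's formula applies and gives
\[
m(P) = \frac{1}{2\pi} \int_{\gamma_1} \eta(x,y),
\]
where $\gamma_1$ is the Deninger path $\{|x|=1,\ |y| \leq 1\}$ on $E_1(\C)$. The symmetry $(x,y) \mapsto (1/x, 1/y)$ of $P$ forces $\gamma_1 \in H_1(E_1(\C), \Z)^-$. The divisors of $x$ and $y$ on $E_1$ are supported on torsion points (the preimages of $x, y \in \{0, \infty\}$), so as in the proof of Theorem~\ref{zagier 21} the Milnor symbol $\{x,y\}$ need not extend to $K_2(E_1)\otimes\Q$; one applies Bloch's trick, adding a correction $\{c, h\}$ with $c \in \Q^\times$ and $h$ rational supported on torsion, to obtain $\tilde x \in K_2(E_1) \otimes \Q$. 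The added term contributes $\log c \cdot \int_{\gamma_1} \darg h$, which is evaluated via Lemma~\ref{lem int darg}.

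Proposition~\ref{pro int eta DE} then yields $\int_{\gamma_1} \eta(\tilde x) = -(\gamma_{E_1}^+ \bullet \gamma_1)\, D_{E_1}(\beta_1)$ for an explicit torsion divisor $\beta_1$ on $E_1(\C)$. Choose an isogeny $\varphi : E_0 \to E_1$; by Lemma~\ref{lem DE DE'}, each value $D_{E_1}(R)$ expands as a $\Q$-linear combination of $D_{E_0}$ at the preimages $\varphi^{-1}(R)$, which are torsion points of $E_0$. One then applies Mellit's incidence relations (Proposition~\ref{pro incident}) to the resulting sum on $E_0$, using the oddness of $D_{E_0}$ and its vanishing on the identity component of $E_0(\R)$, to collapse the combination to a rational multiple of $D_{E_0}(P+Q) - D_{E_0}(P)$.

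Finally, combining Theorem~\ref{zagier 21} with the Hecke functional equation $L'(E_0, 0) = \frac{21}{4\pi^2} L(E_0, 2)$ gives $2 L'(E_1, 0) = \frac{4}{\pi}(D_{E_0}(P+Q) - D_{E_0}(P))$, and matching the coefficient extracted above with this value completes the proof. The main obstacle is precisely this coefficient matching: identifying $E_1$ concretely within the isogeny class of $E_0$, selecting the correct isogeny $\varphi$, tracking its action on the torsion divisor $\beta_1$, and choosing the right triples of incident lines on $E_0$ so that the incidence relations of Proposition~\ref{pro incident} produce exactly the rational factor $4/\pi$ predicted by Boyd's conjecture.
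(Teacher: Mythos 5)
Your overall architecture --- Deninger's formula, Proposition \ref{pro int eta DE}, an isogeny plus Lemma \ref{lem DE DE'} to move between curves in the isogeny class, and Proposition \ref{pro incident} to prove the resulting dilogarithm identity --- is exactly the paper's. But the proposal stops short of the actual mathematical content. After the reduction, everything hinges on one explicit identity between elliptic dilogarithm values at torsion points (in the paper's normalization, $D_{E_1}(P_1) = -2\bigl(2D_{E_1}(A) - D_{E_1}(B) - D_{E_1}(B')\bigr)$, where $A$, $B$, $B'$ are $8$-torsion points of $E_1 = 21a4$ lying over $P+Q$ and $P$), and you explicitly defer finding the incident lines that prove it. That identity \emph{is} the theorem; without exhibiting the lines (the paper uses the tangents to $E_1$ at $\pm A$ together with the line through the $2$-torsion points, found by computer search) the argument is a strategy, not a proof. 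Note also that the paper applies Proposition \ref{pro incident} on $E_1$, pulling $D_{E_0}(P+Q)$ and $D_{E_0}(P)$ back along the degree-$2$ isogeny $\lambda\colon E_1\to E_0$ via Lemma \ref{lem DE DE'}, rather than pushing $D_{E_1}(P_1)$ down to $E_0$ as you propose; your direction is not obviously wrong, but it is unverified and would require its own search for incident lines on $E_0$.

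Two smaller inaccuracies. First, $x+\tfrac1x+y+\tfrac1y+3$ \emph{does} vanish on the torus: the Deninger path is an open path joining $(\bar\zeta_3,-1)$ to $(\zeta_3,-1)$, and one must check, as the paper does, that it represents $\tfrac12\gamma_1$ in $H_1(E_1(\C),\Q)$ for a generator $\gamma_1$ of $H_1(E_1(\C),\Z)^-$ with $\gamma_{E_1}^+\bullet\gamma_1=2$; asserting that the path itself lies in $H_1(E_1(\C),\Z)^-$ skips this, and the factor $\tfrac12$ enters the final constant you need to match. Second, no Bloch-type correction is needed here: the polynomial is tempered, the tame symbols of $\{x,y\}$ at the $4$-torsion points are roots of unity, and $\{x,y\}$ already lies in $K_2(E_1)\otimes\Q$. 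The correction $\{7,f_P/f_Q^2\}$ in the paper pertains to the auxiliary symbol $\{f_P,f_Q\}$ on $E_0$ used to prove Theorem \ref{zagier 21}, not to the Mahler-measure symbol on $E_1$.
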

The change of variables
\begin{equation*}
X=x(x+y+3)+1 \qquad Y=x(x+1)(x+y+3)+1
\end{equation*}
puts $E_1$ in the Weierstrass form $Y^2+XY=X^3+X$. This is the elliptic curve labelled $21a4$ in Cremona's tables \cite{cremona:tables}. The Mordell-Weil group $E_1(\Q)$ is isomorphic to $\Z/4\Z$ and is generated by $P_1=(1,1)$.

The polynomial $P$ satisfies Deninger's conditions \cite[3.2]{deninger:mahler}, so we have
\begin{equation*}
m(P)= \frac{1}{2\pi} \int_{\gamma_P} \eta(x,y)
\end{equation*}
where $\gamma_P$ is the path defined by $\gamma_P = \{(x,y) \in E_1(\C) : |x|=1, |y| \leq 1\}$. The path $\gamma_P$ joins the point $\bar{A} = (\bar{\zeta_3},-1)$ to $A=(\zeta_3,-1)$. Note that these points have last coordinate $-1$, so the discussion in \cite[p. 272]{deninger:mahler} applies and $\gamma_P$ defines an element of $H_1(E_1(\C),\Q)$. After some computation, we find that $\gamma_P = \frac12 \gamma_1$ where $\gamma_1$ is a generator of $H_1(E_1(\C),\Z)^-$ such that $\gamma_{E_1}^+ \bullet \gamma_1 = 2$ (note that $E_1(\R)$ is connected). Using Proposition \ref{pro int eta DE}, it follows that
\begin{equation*}
\int_{\gamma_P} \eta(x,y) = \frac12 \int_{\gamma_1} \eta(x,y) = - D_{E_1}(\beta)
\end{equation*}
where $\beta = \dv(x) * \dv(y)^-$ is the convolution of the divisors of $x$ and $y$. We have
\begin{equation*}
\dv(x) = (P_1)+(2P_1)-(-P_1)-(0) \qquad \dv(y) = (P_1)-(2P_1)-(-P_1)+(0)
\end{equation*}
so that $\beta = 4(P_1)-4(-P_1)$. This gives
\begin{equation*}
\int_{\gamma_P} \eta(x,y) = -8D_{E_1}(P_1).
\end{equation*}

We are now going to relate elliptic dilogarithms on $E_1$ and $E_0$ using Proposition \ref{pro incident} and Lemma \ref{lem DE DE'}. The curve $E_1$ is the $X_1(21)$-optimal elliptic curve in the isogeny class of $E_0$. We have a $2$-isogeny $\lambda : E_1 \to E_0$ whose kernel is generated by $2P_1=(0,0)$. Using Vélu's formulas \cite{velu}, we find that an equation of $\lambda$ is
\begin{equation*}
\lambda(X,Y) = \Bigl(\frac{X^2+1}{X},-\frac{1}{X}+\frac{X^2-1}{X^2} Y\Bigr).
\end{equation*}
The preimages of $P+Q$ under $\lambda$ are the points $A=(\zeta_3,-1-\zeta_3)$ and $\bar{A}=(\bar{\zeta_3},-1-\bar{\zeta_3})$, while the preimages of $P$ are given by $B=(\frac{5+\sqrt{21}}{2},4+\sqrt{21})$ and $B'=(\frac{5-\sqrt{21}}{2},4-\sqrt{21})$. Note that $2A=-P_1$ and $2B=P_1$ so that $A$ and $B$ have order $8$ and we have the relations $\bar{A}=A+2P_1=5A$ and $B'=5B$. Moreover $C=A+B$ is the $2$-torsion point given by $C=(\frac{-1+3i\sqrt{7}}{8},\frac{1-3i\sqrt{7}}{16})$. Using Theorem \ref{zagier 21} and Lemma \ref{lem DE DE'}, we have
\begin{equation*}
L'(E_0,0) = \frac{4}{\pi} \bigl(D_{E_1}(A)+D_{E_1}(\bar{A})-D_{E_1}(B)-D_{E_1}(B')\bigr)
\end{equation*}
so that Theorem \ref{boyd 21} reduces to show
\begin{equation*}
D_{E_1}(P_1)=-2 \bigl(2D_{E_1}(A)-D_{E_1}(B)-D_{E_1}(B')\bigr).
\end{equation*}
We look for lines $\ell$ in $\mathbf{P}^2$ such that $\ell \cap E_1$ is contained in the subgroup generated by $A$ and $B$. Using a computer search, we find that the tangents to $E$ at $A$ and $-A$ and the line $\ell : Y+\frac12 X=0$ passing through the 2-torsion points of $E$ are incident. By Proposition \ref{pro incident}, we deduce the relation
\begin{equation*}
\begin{split}
& 4 D_{E_1}(2A)+4D_{E_1}(3A)+D_{E_1}(4A)+2D_{E_1}(-2A)+4D_{E_1}(-A)\\
& \quad + 2 D_{E_1}(2A+C)+4D_{E_1}(3A+C)+2D_{E_1}(-2A+C)+4D_{E_1}(-A+C)=0.
\end{split}
\end{equation*}
Since $D_{E_1}$ is odd and $D_{E_1}(3A)=-D_{E_1}(\bar{A})=-D_{E_1}(A)$, this simplifies to
\begin{equation*}
2 D_{E_1}(2A)-8D_{E_1}(A)+4D_{E_1}(B)+4D_{E_1}(B')=0
\end{equation*}
which is the desired equality.

%
%
%

\subsection{Conductor 48}

We prove the following case of Boyd's conjectures \cite[Table 1, $k=12$]{boyd:expmath}.

\begin{thm}\label{boyd 48}
We have the identity $m(x+\frac{1}{x}+y+\frac{1}{y}+12)=2L'(E,0)$, where $E$ is the elliptic curve defined by $x+\frac{1}{x}+y+\frac{1}{y}+12=0$.
\end{thm}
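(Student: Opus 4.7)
The plan is to adapt the strategy used for conductor $21$ (Theorem \ref{boyd 21}) step by step. First, I would put $E$ in short Weierstrass form by an appropriate change of variables (analogous to $X=x(x+y+3)+1$, $Y=x(x+1)(x+y+3)+1$ for $k=3$), identify it with a curve in the isogeny class $48a$ of Cremona's tables, and tabulate its Mordell--Weil group. The divisors of $x$ and $y$ will be supported on the rational torsion subgroup, so that, possibly after correcting $\{x,y\}$ by a symbol $\{c,f\}$ with $c\in\Q^\times$ via Bloch's trick, we obtain an element of $K_2(E)\otimes\Q$.

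Second, since $P=x+1/x+y+1/y+12$ satisfies Deninger's conditions (the constant term is large enough to keep the zero set away from the torus), we have $m(P)=\frac{1}{2\pi}\int_{\gamma_P}\eta(x,y)$ where $\gamma_P=\{(x,y)\in E(\C):|x|=1,\ |y|\leq 1\}$. I would compute the endpoints of $\gamma_P$ (expected to be torsion points, as for $k=3$) and the intersection number $\gamma_E^+\bullet\gamma_P$, thereby expressing $\gamma_P$ as an explicit rational multiple of a generator of $H_1(E(\C),\Z)^-$. Proposition \ref{pro int eta DE} then gives $\int_{\gamma_P}\eta(x,y)=-c\sum_i n_i D_E(P_i)$ for explicit torsion points $P_i$ arising from the convolution $\dv(x)*\dv(y)^-$.

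Third, I would locate in the list of \cite{brunault:mod_units} a curve $E'$ in the isogeny class parametrized by a modular unit map $\varphi:X_1(N)\to E'$ with $N\mid 48$. The pull-backs of well-chosen coordinate functions on $E'$ are modular units on $X_1(N)$ whose divisors (and thus Siegel-unit expressions, via (\ref{def gab}) and (\ref{gab gamma})) I read off from the images $\varphi(\mathrm{cusps})$. Decomposing the Deninger path into modular symbols $\xi(\gamma_i)$ and applying Theorem \ref{main thm} yields $\int_{\gamma_P}\eta(x,y)=\pi\Lambda^*(F,0)$ for an explicit modular form $F$ of weight $2$ and level dividing $48^2$. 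Decomposing $F$ on the basis of the newform $f_E$ together with the Eisenstein series $E_{2,d}$, and evaluating the Eisenstein contributions with $\zeta(0)=-\frac12$, $\zeta(-1)=-\frac{1}{12}$ exactly as in the conductor $21$ case, should yield $\int_{\gamma_P}\eta(x,y)=-4\pi L'(E,0)$ after Bloch's correction (whose $\log$ terms can be evaluated using Lemma \ref{lem int darg}).

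The main obstacle will be the final geometric step matching the Siegel-unit side to the target $D_E$-expression. If $\varphi$ parametrizes an isogenous curve $E'\neq E$, I would transfer the identity via Lemma \ref{lem DE DE'}, and then search, as in the conductor $21$ proof, for a triple of incident lines in $\mathbf{P}^2$ whose intersections with $E$ lie in the subgroup generated by the relevant torsion points, so that Proposition \ref{pro incident} produces the precise relation reducing the sum of $D_E(P_i)$ to $D_E(\beta)$. There is no a priori guarantee that such a triple exists; its existence (to be confirmed by a direct computer search in the torsion subgroup of $E$) is the crux of the proof, exactly as it was for conductor $21$.
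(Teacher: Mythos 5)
Your overall strategy is the one the paper follows: compute $m(P_{12})$ as an elliptic dilogarithm of a $4$-torsion point on $E=48a5$ via Deninger's theorem and Proposition \ref{pro int eta DE}, prove a Zagier-type identity on the modular-unit-parametrized curve in the isogeny class (this is $E_1=48a4$; the strong Weil curve $48a1$ is \emph{not} parametrized by modular units), transfer by Lemma \ref{lem DE DE'} along the cyclic degree-$8$ isogeny $E_1\to E$, and close the remaining dilogarithm identity with incident lines. Two of your hedges resolve favourably: the functions used on $E_1$ are $f,g$ with $(f)=2(P_1)-2(0)$ and $(g)=2(A)+2(\bar A)-4(0)$ supported on the $8$-torsion, whose tame symbols are $\pm1$, so no Bloch correction is needed; and the incident-line relation does exist, though it requires a search producing $691$ incident triples, combined with the distribution relations of $D_{E_1}$, rather than the single triple that sufficed for conductor $21$ --- you correctly flagged this as the unguaranteed crux.

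The one step that would fail as you describe it is the evaluation of the regulator integral. At level $48$ the relevant space of weight-$2$ forms is not spanned by $f_{E_1}$ and Eisenstein series: it contains cusp forms old at level $24$, and the form $F_1$ produced by Theorem \ref{main thm} from the generator $\{-\frac17,\frac17\}$ of $H_1(E_1(\C),\Z)^-$ is \emph{not} of the shape $cf_{E_1}+(\text{Eisenstein})$, so the conductor-$21$ recipe of evaluating the non-newform part with $\zeta(0)$ and $\zeta(-1)$ does not apply. The paper's device is to integrate over a second modular-symbol representative $\{-\frac{2}{11},\frac{2}{11}\}$ of the same homology class, obtaining a form $F_2$ with $2F_1-F_2=4f_{E_1}$; since the cycles are homologous, the combination $2\gamma_1-\gamma_2$ isolates the newform and yields $\int\eta(f,g)=4\pi L'(E_1,0)$. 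Without some such mechanism for killing the old and Eisenstein components, your plan stalls at this point.
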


The curve $x+\frac{1}{x}+y+\frac{1}{y}+12=0$ is isomorphic to the elliptic curve $E=48a5$. We have a commutative diagram
\begin{equation}\label{cd 48}
\begin{tikzcd}
X_1(48) \arrow{r}{\pi} \arrow{d}{\varphi_1} & X_0(48) \arrow{d}{\varphi_0} \\
E_1 \arrow{r}{\lambda_0} & E_0 \arrow{r}{\lambda} & E.
\end{tikzcd}
\end{equation}
Here $E_1=48a4$ is the $X_1(48)$-optimal elliptic curve and $E_0=48a1$ is the strong Weil curve in the isogeny class of $E$. They are given by the equations
\begin{equation}
E_1 : y^2=x^3+x^2+x \qquad E_0 : y^2=x^3+x^2-4x-4.
\end{equation}
The isogeny $\lambda_0$ has degree $2$ and its kernel is generated by $P_1=(0,0)$. Using Vélu's formulas, we find an explicit equation for $\lambda_0$:
\begin{equation}
\lambda_0(x,y) = \left(x+\frac{1}{x},(1-\frac{1}{x^2})y\right).
\end{equation}
The modular parametrization $\varphi_0$ has degree $2$ and we have
\begin{gather*}
\varphi_0(0)=\varphi_0(1/2)=0 \qquad \varphi_0(1/3)=\varphi_0(1/6)=(-1,0) \\
\varphi_0(1/8)=\varphi_0(1/16)=(-2,0) \qquad \varphi_0(1/24)=\varphi_0(1/48)=(2,0)\\
\varphi_0(1/4)=(0,2i) \qquad \varphi_0(-1/4)=(0,-2i)\\
\varphi_0(1/12)=(-4,-6i) \qquad \varphi_0(-1/12)=(-4,6i).
\end{gather*}
Moreover the ramification indices of $\varphi_0$ at the cusps $\frac14,-\frac14,\frac{1}{12},-\frac{1}{12}$ are equal to $2$. Let $S_0$ be the set of points $P$ of $E_0(\C)$ such that $\varphi_0^{-1}(P)$ is contained in the set of cusps of $X_0(48)$, and similarly let $S_1$ be the set of points $P$ of $E_1(\C)$ such that $\varphi_1^{-1}(P)$ is contained in the set of cusps of $X_1(48)$. By the previous computation, we have
\begin{equation}
S_0 = E_0[2] \cup \{(0,\pm 2i),(-4,\pm 6i)\}.
\end{equation}
The curve $E_0$ doesn't admit a parametrization by modular units, but the curve $E_1$ does. Indeed, consider the point $A=(i,i) \in E_1(\C)$. It has order $8$ and satisfies $\bar{A}=3A$ and $4A=P_1$. Moreover $\lambda_0(A)=(0,2i)$. Because of the commutative diagram (\ref{cd 48}), we know that $S_1$ contains $\lambda_0^{-1}(S_0)$; in particular $S_1$ contains the subgroup generated by $A$. Therefore the following functions on $E_1$ are modular units
\begin{equation}
(f)=2(P_1)-2(0) \qquad (g)=2(A)+2(\bar{A})-4(0).
\end{equation}
We may take $f=x$ and $g=x^2-2y+2x+1$. It is plain that $f$ and $g$ parametrize $E_1$. Moreover the tame symbols of $\{f,g\}$ at $0,P_1,A,\bar{A}$ are equal to $1,1,-1,-1$ so that $\{f,g\}$ belongs to $K_2(E_1) \otimes \Q$. The expression of $f$ and $g$ in terms of Siegel units is
\begin{equation}
\varphi_1^* f = \frac{g_2 g_{20} g_{22}}{g_4 g_{10} g_{14}} \qquad \varphi_1^* g = \frac{g_1^2 g_2 g_{10} g_{11}^2 g_{12}^4 g_{13}^2 g_{14} g_{22} g_{23}^2}{g_4^3 g_5^2 g_6^2 g_7^2 g_{17}^2 g_{18}^2 g_{19}^2 g_{20}}. 
\end{equation}
A generator $\gamma_1$ of $H_1(E_1(\C),\Z)^-$ is given by
\begin{equation*}
\gamma_1 = (\varphi_1)_* \left\{-\frac17,\frac17\right\} = (\varphi_1)_* \left( \xi \begin{pmatrix} 1 & 0 \\ 7 & 1 \end{pmatrix}-\xi \begin{pmatrix} 1 & 0 \\ -7 & 1 \end{pmatrix} \right).
\end{equation*}
Using Theorem \ref{main thm}, we find
\begin{equation}\label{int gamma1}
\int_{\gamma_1} \eta(f,g) = \int_{-1/7}^{1/7} \eta(\varphi_1^* f,\varphi_1^*g) =  \pi L'(F_1,0)
\end{equation}
where $F_1$ is the modular form of weight 2 and level 48 given by
\begin{equation*}
F_1=  4q^2 + 8q^3 - 4q^6 - 8q^{10} - 32q^{11} - 16q^{15} + 4q^{18} + 32q^{19} + \ldots
\end{equation*}
This time $F_1$ is not a multiple of the newform $f_{E_1}$ associated to $E_1$. We look for another modular symbol. Another generator $\gamma_2$ of $H_1(E_1(\C),\Z)^-$ is given by
\begin{equation*}
\gamma_2 = (\varphi_1)_* \left\{-\frac{2}{11},\frac{2}{11}\right\} = (\varphi_1)_* \left( \xi \begin{pmatrix} 2 & 1 \\ 11 & 6 \end{pmatrix} + \xi \begin{pmatrix} 1 & 0 \\ 6 & 1 \end{pmatrix} - \xi \begin{pmatrix} -2 & 1 \\ 11 & -6 \end{pmatrix} -\xi \begin{pmatrix} 1 & 0 \\ -6 & 1 \end{pmatrix} \right).
\end{equation*}
Using Theorem \ref{main thm}, we find
\begin{equation}\label{int gamma2}
\int_{\gamma_2} \eta(f,g) = \int_{-2/11}^{2/11} \eta(\varphi_1^* f,\varphi_1^*g) =  \pi L'(F_2,0)
\end{equation}
where $F_2$ is the modular form of weight 2 and level 48 given by
\begin{equation*}
F_2 =  -4q + 8q^2 + 12q^3 + 8q^5 - 8q^6 - 4q^9 - 16q^{10} - 48q^{11} + 8q^{13} - 24q^{15} - 8q^{17} + 8q^{18} + 48q^{19}+  \ldots
\end{equation*}
A computation reveals that $2F_1-F_2=4f_{E_1}$. Summing (\ref{int gamma1}) and (\ref{int gamma2}), we get
\begin{equation}\label{int gamma12 L}
\int_{2\gamma_1-\gamma_2} \eta(f,g) = 4\pi L'(E_1,0).
\end{equation}
Since $\gamma_{E_1}^+ \bullet \gamma_1 = \gamma_{E_1}^+ \bullet \gamma_2 = 2$, Proposition \ref{pro int eta DE} gives
\begin{equation}\label{int gamma12 D}
\int_{2\gamma_1-\gamma_2} \eta(f,g) = -2 D_{E_1}(\beta(f,g)) = -32 D_{E_1}(A).
\end{equation}
Combining (\ref{int gamma12 L}) and (\ref{int gamma12 D}), we have thus shown Zagier's conjecture for $E_1$.

\begin{thm}\label{zagier 48}
We have the identities $L'(E_1,0)=-\frac{8}{\pi} D_{E_1}(A)$ and $L(E_1,2)=-\frac{2\pi}{3} D_{E_1}(A)$.
\end{thm}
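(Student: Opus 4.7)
The first identity $L'(E_1,0) = -\frac{8}{\pi} D_{E_1}(A)$ essentially falls out of the preceding discussion. The plan is simply to equate the two expressions just derived for $\int_{2\gamma_1 - \gamma_2} \eta(f,g)$: from (\ref{int gamma12 L}) this integral equals $4\pi L'(E_1,0)$, while (\ref{int gamma12 D}) identifies it with $-32\, D_{E_1}(A)$. Dividing by $4\pi$ yields the claim. There is nothing to check beyond what has already been done in the construction of the two modular symbols $\gamma_1, \gamma_2$ and the verification that $2F_1 - F_2 = 4 f_{E_1}$.

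For the second identity, the plan is to transport the statement across the functional equation of $L(E_1,s)$. Since $E_1$ has conductor $N=48$, the completed $L$-function $\Lambda(E_1,s) = N^{s/2}(2\pi)^{-s}\Gamma(s) L(E_1,s)$ satisfies $\Lambda(E_1,s) = \epsilon_1 \Lambda(E_1,2-s)$ with root number $\epsilon_1 \in \{\pm 1\}$. At $s=2$ this gives $\Lambda(E_1,2) = \tfrac{N}{4\pi^2} L(E_1,2) = \tfrac{12}{\pi^2} L(E_1,2)$, and at $s=0$ the pole of $\Gamma(s)$ cancels the zero of $L(E_1,s)$ to produce $\Lambda(E_1,0) = L'(E_1,0)$. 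Combining these with the functional equation gives
\begin{equation*}
L'(E_1,0) = \frac{12\,\epsilon_1}{\pi^2}\, L(E_1,2),
\end{equation*}
which, together with the first identity, yields $L(E_1,2) = -\tfrac{2\pi}{3\epsilon_1} D_{E_1}(A)$.

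It remains only to pin down $\epsilon_1 = +1$ for the curve $E_1 = 48a4$. This can be read off from Cremona's tables, or checked intrinsically: $L(E_1,2)$ is a positive real number because the Euler product for $L(E_1,s)$ converges absolutely on $\Re(s)>3/2$, so $\log L(E_1,s)$ is real there; on the other hand, the numerical value $\int_{\gamma_1} \omega_{f_{E_1}}$ together with the sign of $\gamma_{E_1}^+ \bullet (2\gamma_1-\gamma_2)$ fixes the sign of $D_{E_1}(A)$ and hence, via the first identity, the sign of $L'(E_1,0)$. Matching the two signs forces $\epsilon_1 = +1$. I do not expect any serious obstacle here, since the only delicate point is fixing $\epsilon_1$, and the argument above (or a direct table lookup) settles it.
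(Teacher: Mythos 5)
Your proposal is correct and follows the paper's own route: the first identity is obtained exactly as in the text by equating (\ref{int gamma12 L}) with (\ref{int gamma12 D}), and the second by the functional equation $L'(E_1,0)=\tfrac{48}{4\pi^2}L(E_1,2)$, which the paper uses implicitly (and explicitly in the conductor-$21$ case). Your extra care in pinning down the root number $\epsilon_1=+1$ (e.g.\ from the fact that $E_1$ has analytic rank $0$, or from Cremona's tables) is a point the paper glosses over, but it does not change the argument.
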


Let us now turn to the elliptic curve $E$. Let $P_k$ be the polynomial $P_k(x,y)=x+1/x+y+1/y+k$. For $k \not\in \{ 0, \pm 4\}$, let $C_k$ be the elliptic curve defined by $P_k(x,y)=0$. The change of variables
\begin{equation*}
X = 4x(x+y+k) \qquad Y=8x^2(x+y+k)
\end{equation*}
puts $C_k$ in Weierstrass form $Y^2+2kXY+8kY=X^3+4X^2$. The point $Q=(0,0)$ on $C_k$ has order $4$. We show that the Mahler measure of $P_k$ can be expressed in terms of the elliptic dilogarithm.

\begin{pro}\label{pro DCk}
Let $k$ be a real number such that $|k| > 4$. We have
\begin{equation*}
m(P_k) = \begin{cases} -\frac{4}{\pi} D_{C_k}(Q) & \textrm{if } k>0,\\
\frac{4}{\pi} D_{C_k}(Q) & \textrm{if } k<0.
\end{cases}
\end{equation*}
\end{pro}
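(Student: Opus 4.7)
The plan is to apply the machinery of Proposition~\ref{pro int eta DE} directly on $C_k$, which is the ``easy'' case since $|k|>4$ keeps $P_k$ nonzero on the torus $|x|=|y|=1$. Indeed $|x+1/x+y+1/y|\leq 4<|k|$ there, so Jensen's formula (applied to $yP_k$ as a quadratic in $y$ whose two roots are reciprocal) gives
\[m(P_k)=\frac{1}{2\pi}\int_{\gamma_P}\eta(x,y),\qquad \gamma_P=\{(x,y)\in C_k(\C):|x|=1,\ |y|\leq 1\}.\]
The path $\gamma_P$ is a closed loop on $C_k(\C)$ and is anti-invariant under complex conjugation, so it defines an integral class in $H_1(C_k(\C),\Z)^-$.

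I would then compute $\dv(x)$ and $\dv(y)$ on $C_k$. Inverting the birational map gives $x=Y/(2X)$ and, after using the Weierstrass equation, $y=4k/X-2X/Y$. From $\dv(X)=(Q)+(-Q)-2(0)$ and $\dv(Y)=2(Q)+(2Q)-3(0)$ one reads off
\[\dv(x)=(Q)+(2Q)-(-Q)-(0).\]
For $y$, the local expansion $Y=X^2/(2k)+O(X^4)$ near $Q$ shows that the two simple poles of $4k/X$ and $-2X/Y$ cancel at $Q$ and leave $y\sim -X/(4k)$, so $\ord_Q(y)=1$; a similar analysis at $-Q,2Q,0$ (using $X$, $Y$, $X/Y$ respectively as uniformizer) gives
\[\dv(y)=(Q)+(0)-(-Q)-(2Q).\]
Reading off the leading coefficients in the chosen uniformizers, the four tame symbols of $\{x,y\}$ at $0,Q,-Q,2Q$ all equal $+1$, so $\{x,y\}\in K_2(C_k)$.

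I will then apply Proposition~\ref{pro int eta DE}. Convolving the two divisors in the group $C_k(\C)$, where $Q$ has order $4$, the contributions at $0$ and $2Q$ cancel, and those at $Q$ and $-Q$ add up to $\beta=4(Q)-4(-Q)$. By oddness of $D_{C_k}$ this gives $D_{C_k}(\beta)=8\,D_{C_k}(Q)$, so
\[\int_{\gamma_P}\eta(x,y)=-8\,(\gamma_{C_k}^+\bullet\gamma_P)\,D_{C_k}(Q).\]

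The remaining step, which I expect to be the main obstacle (as it is the only one sensitive to the sign of $k$), is the intersection number. For $|k|>4$ the cubic $(X+4)(X^2+k^2X+4k^2)$ has three real roots, so $C_k(\R)$ has two components, a bounded ``egg'' and an unbounded one; writing $C_k(\C)\cong\C/(\Z+it\Z)$, these are the horizontal circles $\Im(z)=0$ and $\Im(z)=t/2$, and $H_1(C_k(\C),\Z)^-$ is generated by the vertical cycle. I will check that $\gamma_P$ meets $C_k(\R)$ transversally at exactly the two points $\theta=0,\pi$ on $|x|=1$, and that these lie on \emph{different} real components: a direct evaluation of the Weierstrass $X$-coordinate gives, for $k>4$, the unbounded component at $\theta=0$ and the egg at $\theta=\pi$, while for $k<-4$ the two are swapped. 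This forces $\gamma_P$ to generate $H_1^-$, so $\gamma_{C_k}^+\bullet\gamma_P=\pm 1$; tracking the direction of the vertical traversal in $\C/(\Z+it\Z)$ then yields $+1$ for $k>4$ and $-1$ for $k<-4$. Combining, $m(P_k)=\mp(4/\pi)D_{C_k}(Q)$ with the stated sign convention.
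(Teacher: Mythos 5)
Your proposal is correct and follows essentially the same route as the paper: Deninger's reduction of $m(P_k)$ to the regulator integral over $\gamma_k$, the same divisors $\dv(x)=(Q)+(2Q)-(-Q)-(0)$ and $\dv(y)=(Q)-(2Q)-(-Q)+(0)$, Proposition~\ref{pro int eta DE} with $\beta=4(Q)-4(-Q)$, and the intersection number $\gamma_{C_k}^+\bullet\gamma_k=\operatorname{sgn}(k)$. The only difference is that you fill in details the paper leaves implicit (the tame-symbol check in place of citing temperedness, and the two-real-component analysis behind the sign of the intersection number, which the paper dispatches with ``it turns out that'').
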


\begin{proof}
Since $|k|>4$, the polynomial $P_k$ doesn't vanish on the torus, so that
\begin{equation*}
m(P_k) = \frac{1}{2\pi} \int_{\gamma_k} \eta(x,y)
\end{equation*}
where $\gamma_k$ is the closed path on $C_k(\C)$ defined by
\begin{equation*}
\gamma_k = \{(x,y) \in C_k(\C) : |x|=1, |y| \leq 1\}.
\end{equation*}
It turns out that $\gamma_k$ is a generator of $H_1(C_k(\C),\Z)^-$ which satisfies $\gamma_{C_k}^+ \bullet \gamma_k = \operatorname{sgn}(k)$. The divisors of $x$ and $y$ are given by
\begin{equation*}
\dv(x) = (Q)+(2Q)-(-Q)-(0) \qquad \dv(y) = (Q)-(2Q)-(-Q)+(0).
\end{equation*}
Since $P_k$ is tempered, we have $\{x,y\} \in K_2(C_k) \otimes \Q$, and Proposition \ref{pro int eta DE} gives
\begin{equation*}
\int_{\gamma_k} \eta(x,y) = -\operatorname{sgn}(k) D_{C_k}(\beta(x,y)) = -8 \operatorname{sgn}(k) D_{C_k}(Q).
\end{equation*}
\end{proof}

\begin{remark}
The fact that $m(P_k)$ can be expressed as an Eisenstein-Kronecker series was also proved by F. Rodriguez-Villegas \cite{rodriguez:modular}.
\end{remark}

We are now going to relate elliptic dilogarithms on $E=C_{12}$ and $E_1$. Let $\lambda' : E_1 \to E$ be the isogeny $\lambda \circ \lambda_0$ from (\ref{cd 48}). It is cyclic of degree 8 and its kernel is generated by the point $B=(-2-\sqrt{3},3i+2i\sqrt{3})$. A preimage of $Q$ under $\lambda'$ is given by
\begin{equation*}
C = \left(\frac12(\alpha^3+\alpha^2+\alpha-1),\frac12(\alpha^3+\alpha^2-\alpha-3)\right)
\end{equation*}
with $\alpha = \sqrt[4]{-3}$. The point $C$ has order 4 and we have $A=B+2C$. By Lemma \ref{lem DE DE'}, we have
\begin{equation}\label{eq DEQ}
D_E(Q) = 2 \sum_{k \in \Z/8\Z} D_{E_1}(C+kB).
\end{equation}
Combining Theorem \ref{zagier 48}, Proposition \ref{pro DCk} and (\ref{eq DEQ}), Theorem \ref{boyd 48} reduces to show
\begin{equation}\label{rel DE 48}
\sum_{k \in \Z/8\Z} D_{E_1}(C+kB) = 2D_{E_1}(A).
\end{equation}
Let $T$ be the subgroup generated by $B$ and $C$. It is isomorphic to $\Z/8\Z \times \Z/4\Z$. There are 187 lines $\ell$ of $\mathbf{P}^2$ such that $\ell \cap E_1$ is contained in $T$. A computer search reveals that among them, there are 691 unordered triples of lines meeting at a point outside $E_1$. These incident lines yield a subgroup $\mathcal{R}$ of $\Z[T]$ of rank 18 such that $D_{E_1}(\mathcal{R})=0$. Let $\mathcal{R}_{\mathrm{triv}}$ be the subgroup of $\Z[T]$ generated by the following elements
\begin{equation}\label{eq fonc}
[P]-[\bar{P}], \qquad [P]+[-P], \qquad [2P]-2 \sum_{Q \in E_1[2]} [P+Q] \qquad (P,Q \in T).
\end{equation}
The group $\mathcal{R}_{\mathrm{triv}}$ has rank 26 and by Lemma \ref{lem DE DE'}, we have $D_{E_1}(\mathcal{R}_{\mathrm{triv}})=0$. Moreover $\mathcal{R}+\mathcal{R}_{\mathrm{triv}}$ has rank 27 and a generator of $(\mathcal{R}+\mathcal{R}_{\mathrm{triv}})/\mathcal{R}_{\mathrm{triv}}$ is given (for example) by the divisor
\begin{equation*}
\beta = \beta_{E_1}(\ell_1,\ell_2)+\beta_{E_1}(\ell_2,\ell_3)+\beta_{E_1}(\ell_3,\ell_1)
\end{equation*}
where $\ell_1$, $\ell_2$, $\ell_3$ are the lines defined by
\begin{align*}
\ell_1 \cap E_1 & = (B)+(-B)+(0)\\
\ell_2 \cap E_1 & = (B+2C)+(B-C)+(-2B-C)\\
\ell_3 \cap E_1 & = (4B+C)+(-3B+2C)+(-B+C).
\end{align*}
Computing explicitly, this gives
\begin{equation*}
\beta = 2 \left(\sum_{k \in \Z/8\Z} (2C+kB)+(3C+kB)\right) - 2(-A)-2(-\bar{A}) + (4B) - (2C) - (4B+2C).
\end{equation*}
Using the functional equations (\ref{eq fonc}) of $D_{E_1}$, we obtain (\ref{rel DE 48}).

\bibliographystyle{smfplain}
\bibliography{references}

\end{document}